\newtheorem{same}{This should never appear}[section]
\newtheorem{defin}[same]{Definition}
\newtheorem{remark}[same]{Remark}
\newtheorem{theorem}[same]{Theorem}
\newtheorem{example}[same]{Example}
\newtheorem{lemma}[same]{Lemma}
\newtheorem{fact}[same]{Fact}
\newtheorem{question}[same]{Question}
\newtheorem{cor}[same]{Corollary}
\newtheorem{nota}[same]{Notation}
\newtheorem*{prob}{Main Problem}
\newtheorem{defin*}{Definition}
\newtheorem*{theorem*}{Theorem}
\newcommand{\skipitems}[1]{%
  \addtocounter{\@enumctr}{#1}%
}
\newcommand{\bb}{\mathbf{b}}
\newcommand{\rest}{\mathord{\upharpoonright}}
\newcommand{\id}{\textrm{id}}
\newcommand{\K}{\mathbf{K}}
\newcommand{\Kp}{\K^{p\text{-grp}}}
\newcommand{\Kkp}{K^{p\text{-grp}}}
\newcommand{\Kab}{\K^{Ab}}
\newcommand{\LS}{\operatorname{LS}}
\newcommand{\leap}[1]{\le_{#1}}
\newcommand{\lea}{\leap{\K}}
\newcommand{\gtp}{\mathbf{gtp}}
\newcommand{\gS}{\mathbf{gS}}
\DeclareMathOperator{\pp}{pp}    
\title{A model theoretic solution to a problem of L\'{a}szl\'{o} Fuchs}
\date{\today.} 
\author{Marcos Mazari-Armida}
\email{mmazaria@andrew.cmu.edu}
\urladdr{http://www.math.cmu.edu/~mmazaria/ }
\address{Department of Mathematical Sciences \\ Carnegie Mellon
University \\ Pittsburgh, Pennsylvania, USA}
\begin{document}

\begin{abstract}

Problem 5.1 in page 181 of \cite{fuc} asks to find the cardinals $\lambda$ such that there is a universal abelian $p$-group for purity of cardinality $\lambda$, i.e., an abelian $p$-group $U_\lambda$ of cardinality $\lambda$ such that every abelian $p$-group of cardinality $\leq \lambda$ purely embeds in $U_\lambda$.  In this paper we use ideas from the theory of abstract elementary classes to show:
\begin{theorem}
Let $p$ be a prime number.
If $\lambda^{\aleph_0}=\lambda$ or $\forall \mu < \lambda(
\mu^{\aleph_0} < \lambda)$, then there is a universal abelian $p$-group for purity of cardinality $\lambda$. Moreover for $n\geq 2$, there is a universal abelian $p$-group for purity of cardinality $\aleph_n$ if and only if $2^{\aleph_0} \leq \aleph_n$. 
\end{theorem}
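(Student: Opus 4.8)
The plan is to regard the abelian $p$-groups as an abstract elementary class, so that ``universal abelian $p$-group for purity'' becomes ``universal model''. Let $\mathbf{K}$ be the class of abelian $p$-groups equipped with the relation $\leq_p$ of being a pure subgroup. First I would check that $(\mathbf{K},\leq_p)$ is an AEC with $\operatorname{LS}(\mathbf{K})=\aleph_0$: being a $p$-group is an $L_{\omega_1,\omega}$ property, which is why the first-order machinery does not apply directly, while purity is preserved under increasing unions of chains and every countable subset of a group lies in a countable pure subgroup, which yields the chain axioms and the L\"owenheim--Skolem number. I would then record three easy structural facts: disjoint amalgamation --- given $A\leq_p B$ and $A\leq_p C$, the pushout $(B\oplus C)/\{(a,-a):a\in A\}$ is a $p$-group, and a short computation with $p^k$-divisibility shows $B$ and $C$ are pure in it; the joint embedding property, via $B,C\leq_p B\oplus C$; and no maximal models, via $G\leq_p G\oplus(\mathbb{Z}/p\mathbb{Z})^{(\kappa)}$ for arbitrarily large $\kappa$. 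Under this dictionary a universal abelian $p$-group for purity of cardinality $\lambda$ is precisely a universal model of $\mathbf{K}$ of cardinality $\lambda$.

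The existence direction rests on the pure-injective (algebraically compact) hull, which is the concrete avatar of ``saturation'' for $\mathbf{K}$. Every $G\in\mathbf{K}$ sits purely inside its pure-injective hull $\overline{G}\in\mathbf{K}$, with $|\overline{G}|\leq|G|^{\aleph_0}$ (a classical cardinality bound for pure-injective hulls over $\mathbb{Z}$), and the pure-injective abelian $p$-groups are classified by cardinal invariants in a way monotone with respect to pure embeddability, so for every $\lambda$ with $\lambda^{\aleph_0}=\lambda$ there is a pure-injective $p$-group $\Pi_\lambda$ of cardinality $\lambda$ that purely contains every pure-injective $p$-group of cardinality $\leq\lambda$; moreover $\mathbf{K}$ is stable exactly in those $\lambda$ with $\lambda=\lambda^{\aleph_0}$. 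If $\lambda^{\aleph_0}=\lambda$, then any $G$ of size $\leq\lambda$ satisfies $|\overline{G}|\leq\lambda^{\aleph_0}=\lambda$, so $G\leq_p\overline{G}\leq_p\Pi_\lambda$ and $\Pi_\lambda$ is universal. If instead $\mu^{\aleph_0}<\lambda$ for every $\mu<\lambda$, then $\lambda$ is a limit cardinal, the values $\mu^{\aleph_0}$ ($\mu<\lambda$) are cofinal in $\lambda$, and each such value $\nu$ satisfies $\nu^{\aleph_0}=\nu$; so I would build a continuous $\leq_p$-increasing chain of length $\operatorname{cf}(\lambda)$, cofinal in $\lambda$, whose successor terms are of the form $\Pi_\nu$ for suitably large good $\nu$, each chosen to purely contain all earlier terms (possible by universality of $\Pi_\nu$), and let $U$ be its union, of size $\lambda$. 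Since an increasing union of pure subgroups is pure, the standard coherent-embedding argument for special models shows every $G$ of size $\leq\lambda$ embeds purely into $U$, so $U$ is universal.

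For the ``moreover'' clause, Hausdorff's formula gives $\aleph_n^{\aleph_0}=\max(\aleph_n,2^{\aleph_0})$. If $2^{\aleph_0}\leq\aleph_n$ then $\aleph_n^{\aleph_0}=\aleph_n$ and existence follows from the first case. If $2^{\aleph_0}>\aleph_n$, then $\mu^{\aleph_0}\geq 2^{\aleph_0}>\aleph_n>\mu$ for every infinite $\mu<\aleph_n$, so neither sufficient condition holds and the content is that there is then \emph{no} universal model of $\mathbf{K}$ of size $\aleph_n$; this non-existence is the hard part. I would argue by contradiction: assume $U\in\mathbf{K}$, $|U|=\aleph_n$, is universal while $\aleph_n<2^{\aleph_0}$, and run a Kojman--Shelah style coding argument. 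Since $\aleph_n=\aleph_{n-1}^{+}$ with $\aleph_{n-1}>\aleph_0$ --- which is exactly why the hypothesis $n\geq 2$ appears, the case $\aleph_1$ being more delicate --- there is in ZFC a club-guessing sequence on $S^{\aleph_n}_{\aleph_0}=\{\delta<\aleph_n:\operatorname{cf}(\delta)=\aleph_0\}$, a statement not available at $\aleph_1$. Using such a sequence I would attach to each $\eta\in{}^{\aleph_0}2$ an abelian $p$-group $G_\eta$ of size $\aleph_n$, assembled by amalgamating along a tree of small $p$-groups whose pure-embedding type --- recorded through heights and Ulm invariants --- encodes $\eta$, in such a way that from any pure embedding $G_\eta\leq_p U$ one can read off, along a guessed club, an invariant lying in a fixed set of size $\leq\aleph_n$ that determines $\eta$ up to a bounded modification. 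Since there are $2^{\aleph_0}>\aleph_n$ functions $\eta$, two distinct ones would receive the same invariant and hence would have to coincide, a contradiction. (Equivalently, one can phrase this as a functorial reduction from the class of trees of height $\omega+1$, or of linear orders, of size $\aleph_n$ --- which has no universal when $\aleph_n<2^{\aleph_0}$ --- into $\mathbf{K}$.) The main obstacle, and where essentially all the work lies, is designing the family $\{G_\eta\}$ together with the invariant so that pure embeddability genuinely reflects the underlying combinatorics, while checking that every step goes through in ZFC precisely under the hypothesis $n\geq 2$.
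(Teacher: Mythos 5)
There is a genuine gap, and it sits at the heart of your existence argument: the class of abelian $p$-groups is \emph{not} closed under pure-injective hulls. The pure-injective hull of $\bigoplus_{n} \mathbb{Z}(p^{n})$ is $\prod_{n} \mathbb{Z}(p^{n})$, which has elements of infinite order; so your claim that $G \leq_p \overline{G}$ with $\overline{G} \in \mathbf{K}$ is false, and the chain $G \leq_p \overline{G} \leq_p \Pi_\lambda$ never gets started. The problem is not repairable within your framework: the pure-injective abelian groups that \emph{are} $p$-groups are exactly those of the form $D \oplus B$ with $D$ divisible and $B$ bounded (a reduced torsion algebraically compact group is bounded), and a reduced unbounded $p$-group such as $\bigoplus_n \mathbb{Z}(p^n)$ cannot be pure in any such group --- if $p^k B = 0$, every element of $p^k$-th height in a pure subgroup of $D \oplus B$ already lies in its divisible part. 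So there is no pure-injective universal object in $\mathbf{K}_\lambda$ at all, and ``saturation via pure-injectivity'' is precisely the tool that fails for this class; this failure (and the resulting non-first-order-axiomatizability) is the reason the problem requires a different method. Since your second case ($\forall \mu<\lambda\,(\mu^{\aleph_0}<\lambda)$) builds its chain out of the models $\Pi_\nu$ from the first case, it collapses with it.

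The actual argument replaces pure-injective hulls by a transfer of Galois-theoretic information from the ambient class $(\mathrm{Ab},\leq_p)$, where stability under $\lambda^{\aleph_0}=\lambda$ is known via pp-types: the key observation is that if $G_1, G_2$ are $p$-groups pure in an arbitrary abelian group $N$, then $G_1+G_2$ is again a $p$-group containing both purely, so amalgamation and equality of Galois-types in $(\mathrm{Ab},\leq_p)$ descend to $\mathbf{K}$. This yields $\lambda$-stability of $\mathbf{K}$ for $\lambda^{\aleph_0}=\lambda$, and a general AEC theorem (stability plus amalgamation, joint embedding and no maximal models) then produces universal models under either cardinal hypothesis. (A purely algebraic shortcut: the $p$-torsion part of a universal abelian group for purity of cardinality $\lambda$ is a universal $p$-group.) Your treatment of the ``moreover'' clause --- Hausdorff's formula for one direction, and a Kojman--Shelah club-guessing construction at $\aleph_n=\aleph_{n-1}^+$ with $n\geq 2$ for non-existence when $\aleph_n<2^{\aleph_0}$ --- is the right idea and matches what is actually done (the non-existence is quoted from Kojman--Shelah rather than reproved), but as written your plan does not establish the positive direction.
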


As the theory of abstract elementary classes has barely been used to tackle algebraic questions,  an effort was made to introduce this theory from an algebraic perspective.

\end{abstract}


\maketitle

{\let\thefootnote\relax\footnote{{AMS 2010 Subject Classification:
Primary: 20k30, 03C48. Secondary: 03C45, 03C60, 13L05.
Key words and phrases.  Abelian groups; p-groups; Abstract Elementary
Classes; Universal models.}}}


\section{Introduction}

The aim of this paper is to address Problem 5.1 in page 181 of \cite{fuc}. The problem stated by Fuchs is the following:

\begin{prob}\label{mprob}
For which cardinals $\lambda$ is there a universal abelian $p$-group for purity? We mean an abelian $p$-group $U_\lambda$ of cardinality $\lambda$ such that every abelian $p$-group of cardinality $\leq \lambda$ embeds in $U_\lambda$ as a pure subgroup. The same question for torsion-free abelian groups.
\end{prob}

The question for torsion-free abelian groups has been thoroughly studied as witnessed by \cite{kojsh}, \cite{sh3}, \cite{sh820} and \cite{kuma}. Due to this, we focus in this paper in the case of abelian $p$-groups\footnote{Recall that $G$ is an abelian $p$-group if it is an abelian group and every element of $G$ different from zero has order $p^n$ for some $n \in \mathbb{N}$.}. Regarding abelian $p$-groups, there are some results for the  subclass of separable abelian $p$-groups, these results appeared in \cite{kojsh}, \cite{sh3} and \cite{sh820}.

The solution we provide for the Main Problem extends the ideas presented in \cite{kuma} to the class of abelian $p$-groups.\footnote{ For the reader familiar with first-order logic and \cite{kuma}, the reason we can not simply apply the results of \cite{kuma} to abelian $p$-groups is because the class of abelian $p$-groups is not first-order axiomatizable.} We show that there are many cardinals with universal abelian $p$-groups for purity:

\textbf{ Theorem \ref{main}. } \textit{Let $p$ be a prime number.
If $\lambda^{\aleph_0}=\lambda$ or $\forall \mu < \lambda(
\mu^{\aleph_0} < \lambda)$, then there is a universal abelian $p$-group for purity of cardinality $\lambda$.}

The proof has three main steps.\footnote{All the definitions needed to understand this paragraph are given in Section 2 with many algebraic examples to showcase them.} First, we identify the class of abelian $p$-groups with pure embeddings as an abstract elementary class and show that it has amalgamation, joint embedding and no maximal models (see Fact \ref{bas}). Then, we show that the class of abelian $p$-groups with pure embeddings  is $\lambda$-stable if $\lambda^{\aleph_0}=\lambda$ (see Theorem \ref{psta}). Finally, the assertion follows from using some general results on abstract elementary classes (see Theorem  \ref{main}). 

Using some results of \cite{kojsh} it is possible to show that if some cardinal inequalities hold, then there are some cardinals where there can not be universal abelian $p$-groups for purity (see Lemma \ref{nouniv}). The techniques used to obtain this result are explained in detail in \cite{kojsh}, \cite{dz} and \cite[\S 2.4]{balc}.

As a simple corollary of Theorem \ref{main} and what was mentioned in the paragraph above, we obtain a complete solution to the Main Problem below $\aleph_\omega$ with the exception of $\aleph_0$ and $\aleph_1$.

\textbf{Corollary \ref{bome}.} \textit{Let $p$ be a prime number.
For $n\geq 2$, there is a universal abelian $p$-group for purity of cardinality $\aleph_n$ if and only if $2^{\aleph_0} \leq \aleph_n$.}

We address the case of $\aleph_1$ in Lemma \ref{one} and show that the answer depends on the value of the continuum and a combinatorial principle. We leave open the case of $\aleph_0$ (see Question \ref{quest} and the remark below it). 

In Section 2, we make an effort to present all the necessary notions of abstract elementary classes that are needed to understand the proof of the main theorem (Theorem \ref{main}). We present them from an algebraic perspective and give many examples. In particular, we do not assume that the reader is familiar with logic. 

The paper is divided into four sections. Section 2 presents an introduction to abstract elementary classes from an algebraic point of view. Section 3  has the main results. Section 4 presents how the main results can be generalized to other classes.

This paper was written while the author was working on a Ph.D. under the direction of Rami Grossberg at Carnegie Mellon University and I would like to thank Professor Grossberg for his guidance and assistance in my research in general and in this work in particular. I would like to thank John T. Baldwin, Thomas Kucera and Samson Leung for comments that helped improve the paper. I am grateful to the referee for many comments that significantly improved the presentation of the paper. 

\section{An introduction to AECs from an algebraic point of view}

This section will introduce the basic notions of abstract elementary classes that are used in this paper and will hopefully motivate the use of abstract elementary classes to tackle algebraic questions. This section assumes no familiarity with logic, with the exception of Example \ref{etype}.

Abstract elementary classes (AECs for short) were introduced by Shelah \cite{sh88} in the mid-seventies to capture the semantic structure of non-first-order theories. The definition of AEC will mention the following logical notions:  a language $\tau$, $\tau$-structures and the substructure relation. In this paper, the language $\tau$ will always be $\{0, +,-\} \cup \{ r\cdot   : r \in R \}$ where $R$ is a fixed ring and $r \cdot$ is interpreted as multiplication by $r$ for every $r \in R$. So, a class of $\tau$-structures will be a class of $R$-modules for a fixed ring $R$. Moreover, being a substructure will mean being a submodule. The definitions of all of these notions can be found in \cite[\S I]{mar}. We will give many algebraic examples of AECs right after its definition. For a structure $M$, we denote by $|M|$ the underlying set of $M$ and by $\|M\|$ its cardinality.

\begin{defin}\label{aec-def}
  An \emph{abstract elementary class} is a pair $\K = (K, \lea)$, where:

  \begin{enumerate}
    \item $K$ is a class of $\tau$-structures, for some fixed language $\tau = \tau (\K)$. 
    \item $\lea$ is a partial ordering on $K$ extending the substructure relation.
    \item $(K, \lea)$ respects isomorphisms: If $M \lea N$ are in $K$ and $f: N \cong N'$, then $f[M] \lea N'$. In particular (taking $M = N$), $K$ is closed under isomorphisms.
    \item Coherence: If $M_0, M_1, M_2 \in K$ satisfy $M_0 \lea M_2$, $M_1 \lea M_2$, and $M_0 \subseteq M_1$, then $M_0 \lea M_1$.
    \item Tarski-Vaught axioms: Suppose $\delta$ is a limit ordinal and $\{ M_i \in K : i < \delta \}$ is an increasing chain. Then:

        \begin{enumerate}

            \item $M_\delta := \bigcup_{i < \delta} M_i \in K$ and $M_i \lea M_\delta$ for every $i < \delta$.
            \item\label{smoothness-axiom}Smoothness: If there is some $N \in K$ so that for all $i < \delta$ we have $M_i \lea N$, then we also have $M_\delta \lea N$.

        \end{enumerate}

    \item L\"{o}wenheim-Skolem-Tarski axiom: There exists a cardinal $\lambda \ge |\tau(\K)| + \aleph_0$ such that for any $M \in K$ and $A \subseteq |M|$, there is some $M_0 \lea M$ such that $A \subseteq |M_0|$ and $\|M_0\| \le |A| + \lambda$. We write $\LS (\K)$ for the minimal such cardinal.
  \end{enumerate}
\end{defin}

 Below we introduce many examples of AECs in the context of algebra. Recall that (for abelian groups) $G$ is a \emph{pure subgroup} of $H$, denoted   by $G \leq_p H$, if and only if $nH \cap G = nG$ for every $n \in \mathbb{N}$. For $R$-modules $M$ and $N$, we say that $M$ is a pure submodule of $N$ if for every $L$ right $R$-module $L \otimes M \to L \otimes N$ is a monomorphism.
 
\begin{example}\label{ex1} We begin by giving some examples of abstract elementary classes contained in the class of abelian groups:
\begin{itemize}
\item $\K_\leq^{Ab}:=(Ab, \leq)$ where $Ab$ is the class of abelian groups.
\item $\K^{Ab}:=(Ab, \leq_p)$ where $Ab$ is the class of abelian groups.
\item $\Kp=(\Kkp, \leq_p)$ where $\Kkp$ is the class of abelian $p$-groups for $p$ a prime number. A group $G$ is a $p$-group if every element $g \neq 0$ has order $p^n$ for some $n \in \mathbb{N}$. 
\item $\K^{Tor}=(\text{Tor}, \leq_p)$ where $Tor$ is the class of abelian torsion groups. A group $G$ is a torsion group if every element $g \neq 0$ has finite order. 
\item $\K^{tf}=(K^{tf}, \leq_p)$ where $K^{tf}$ is the class of torsion-free abelian groups. A group $G$ is a torsion-free group if every element has infinite order.
\item $\K^{rtf}=(K^{rtf}, \leq_p)$ where $K^{rtf}$ is the class of
reduced torsion-free abelian groups. A group $G$ is reduced if it does not have non-trivial divisible subgroups.
\item $\K^{B_0}=(K^{B_0}, \leq_p)$ where $K^{B_0}$ is the class of finitely Butler groups. A group $G$ is a finitely Butler group if $G$ is torsion-free and every pure subgroup of finite rank is a pure subgroup of a finite rank completely decomposable group (see \cite[\S 14.4]{fuc} for more details).
\item $\K^{\aleph_1\text{-free}}=(\aleph_1\text{-free}, \leq_p)$ where $\aleph_1\text{-free}$ is the class of $\aleph_1\text{-free}$ groups. A group $G$ is $\aleph_1$-free if every countable subgroup of $G$ is free.

\end{itemize}
Many of these examples can be extended to arbitrary rings. Below are some examples of AECs in classes of modules:
\begin{itemize}
\item $(R\text{-Mod}, \subseteq_R)$ where $R$-Mod is the class of $R$-modules.
\item $(R\text{-Mod}, \leq_{p})$ where $R$-Mod is the class of $R$-modules.
\item $(R\text{-Flat}, \leq_{p})$ where $R$-Flat is the class of flat $R$-modules. An $R$-module $F$ is flat if $(-) \otimes F$ is an exact functor. 
\end{itemize}
\end{example}

Let us now introduce some notation.

\begin{nota}\
\begin{itemize}
\item For $\lambda$ an infinite cardinal, $\K_{\lambda} =\{ M \in K : \| M \| =\lambda \}$.
\item Let $M, N \in K$. If we write ``$f: M \to N$" we assume that $f$ is a $\K$-embedding, i.e., $f: M \cong f[M]$ and $f[M] \lea N$. In the classes studied in this paper a $\K$-embedding is either a monomorphism or a pure monomorphism.
\end{itemize}
\end{nota}

The next three properties are properties that an AEC may or may not have. The first one is a weakening of the notion of pushout. 

\begin{defin}\
\begin{itemize}
\item $\K$ has the amalgamation property if for every $M, N_1, N_2 \in K$ such that $M \lea N_1, N_2$ there are $N \in K$, and $\K$-embeddings $f_1: N_1 \to N$ and $f_2: N_2 \to N$ such that $f_1\rest_{M}= f_2\rest_M$.
\item $\K$ has the joint embedding property if for every $M, N \in K$ there are $L \in K$ and $\K$-embeddings $f, g$ such that $f: M \to L$ and $g: N \to L$. 
\item $\K$ has no maximal models if for every $M \in K$, there is $N \in K$ such that $M \lea N$ and $M \neq N$. 
\end{itemize}
\end{defin} 

\begin{example}\label{ex2}
All the AECs introduced in Example \ref{ex1} have joint embedding and no maximal models, this is the case as they are all closed under direct sums. As for the amalgamation property, all the AECs introduced in Example \ref{ex2} have it with the exception $\K^{\aleph_1\text{-free}}$, and the possible exception of $\K^{B_0}$. The problem remains open for the latter case. 
\end{example}

The next notion was introduced by Shelah in \cite{sh300}, it extends the notion of  first-order type to this setting. It is one of the key notions that let us answer the Main Problem.

\begin{defin}\label{gtp-def}
  Let $\K$ be an AEC.
  
  \begin{enumerate}
    \item Let $\K^3$ be the set of triples of the form $(\bb, A,
N)$, where $N \in K$, $A \subseteq |N|$, and $\bb$ is a sequence
of elements from $N$. 
    \item For $(\bb_1, A_1, N_1), (\bb_2, A_2, N_2) \in \K^3$, we
say $(\bb_1, A_1, N_1)E_{\text{at}}^{\K} (\bb_2, A_2, N_2)$ if $A
:= A_1 = A_2$, and there exist $\K$-embeddings $f_\ell : N_\ell \to_A N$ for $\ell \in \{ 1, 2\}$ such that
$f_1 (\bb_1) = f_2 (\bb_2)$ and $N \in \K$.
    \item Note that $E_{\text{at}}^{\K}$ is a symmetric and
reflexive relation on $\K^3$. We let $E^{\K}$ be the transitive
closure of $E_{\text{at}}^{\K}$.
    \item For $(\bb, A, N) \in \K^3$, let $\gtp_{\K} (\bb / A;
N) := [(\bb, A, N)]_{E^{\K}}$. We call such an equivalence class a
\emph{Galois-type}. Usually, $\K$ will be clear from the context and we will omit it.
\item For $M \in K$, $\gS_{\K}(M)= \{  \gtp_{\K}(b / M; N) : M
\leq_{\K} N\in K \text{ and } b \in N\} $. 
  \end{enumerate}
\end{defin}

\begin{example}\label{etype}
This is the only place where we assume the reader is familiar with basic logic notions. The necessary logic background is presented in \cite[\S 2.2]{kuma}. 

\begin{itemize}
\item ( \cite[3.12]{maz20} ) In $\K = \K_\leq^{Ab}$ we have that for any $G, G_1, G_2$ in $\K$ with  $G \leq G_1, G_2$, $\bar{b}_{1}
\in  G_1^{<\omega}$  and $\bar{b}_{2} \in G_2^{<\omega}$,
 \[ \gtp_\K(\bar{b}_{1}/G; G_1) = \gtp_\K(\bar{b}_{2}/G; G_2) \text{ if
and
only if } qf\text{-}tp(\bar{b}_{1}/G , G_1) = qf\text{-}tp(\bar{b}_{2}/G, G_2). \]
Where $qf\text{-}tp(\bar{b}_{1}/G , G_1)$ is the set of quantifier-free formulas with parameters in G that hold for $\bar{b}_{1}$ in $G_1$.
\item ( \cite[3.14]{kuma} ) In $\K= \K^{Ab}$ or $\K= \K^{tf}$ we have that for $G, G_1, G_2$ in $\K$ with  $G \leq_{p} G_1, G_2$, $\bar{b}_{1}
\in  G_1^{<\omega}$  and $\bar{b}_{2} \in G_2^{<\omega}$,
 \[ \gtp_\K(\bar{b}_{1}/G; G_1) = \gtp_\K(\bar{b}_{2}/G; G_2) \text{ if
and
only if } \pp(\bar{b}_{1}/G , G_1) = \pp(\bar{b}_{2}/G, G_2).\]

Where $\pp(\bar{b}_{1}/G , G_1)$ is the set of positive primitive formulas with parameters in G that hold for $\bar{b}_{1}$ in $G_1$.

\end{itemize}

A direct consequence of Lemma \ref{types} (see below) is that the second bullet is also true for $\Kp$ for any prime number $p$.

On a different direction, it is not known if the analogue is true for the class $(R\text{-Flat}, \leq_p)$. It is shown that this is the case under extra hypothesis in \cite[4.4]{maz2}.
\end{example}

One of the main objectives of the theory of AECs is to find dividing lines analogous to those of first-order theories. A dividing line is a property such that the classes satisfying such a property have some nice behaviour while those not satisfying it have a bad one. An introduction to dividing lines for mathematicians not working in mathematical logic can be found in \cite[Part I]{sh1151} and \cite{balc}. One of the first dividing lines that was studied is that of stability.

\begin{defin}
 An AEC $\K$ is \emph{$\lambda$-stable}  if for any $M \in
\K_\lambda$, $| \gS_{\K}(M) | \leq \lambda$. An AEC is stable if there is a $\lambda \geq \LS(\K)$ such that $\K$ is $\lambda$-stable.
\end{defin}

As the following example will be used in the main section of the paper we recall it as a fact. 

\begin{fact}\label{exstab}\
\begin{enumerate}
\item (\cite{grp}, \cite[3.12]{maz1} ) $\K^{Ab}_\leq$ is $\lambda$-stable for every $\lambda \geq \aleph_0$.
\item (\cite[3.16]{kuma} ) $\K^{Ab}$ is $\lambda$-stable for every $\lambda$ such that $\lambda^{\aleph_0} = \lambda$.
\item  (\cite[0.3]{baldwine}, \cite[1.2]{sh820}, \cite[5.9]{maz20} ) $\K^{tf}$, $\K^{rtf}$ and $\K^{B_0}$ are $\lambda$-stable for every $\lambda$ such that $\lambda^{\aleph_0} = \lambda$.
\item $\K^{\aleph_1\text{-free}}$  is $\lambda$-stable for every $\lambda$ such that $\lambda^{\aleph_0} = \lambda$.
\item (\cite[3.6]{maz1}, \cite[3.16]{kuma} ) $(R\text{-Mod}, \subseteq_R)$ and $(R\text{-Mod}, \leq_{p})$ are $\lambda$-stable for every $\lambda$ such that $\lambda^{|R| +\aleph_0} = \lambda$.
\item (\cite[6.20, 6.21]{lrv} ) $(R\text{-Flat}, \leq_{p})$ is stable.
\end{enumerate}
\end{fact}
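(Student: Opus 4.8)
Items (1)--(3), (5) and (6) are precisely the statements proved in the works cited after them, so I take those as given and supply an argument only for (4), the class $\K^{\aleph_1\text{-free}}$, which is stated without a reference.

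The first thing to check is that $\K^{\aleph_1\text{-free}}=(\aleph_1\text{-free},\leq_p)$ really is an AEC with $\LS(\K^{\aleph_1\text{-free}})=\aleph_0$. All the axioms are immediate from the facts that $\leq_p$ refines the submodule relation and that every subgroup of an $\aleph_1$-free group is $\aleph_1$-free, with the sole exception of closure of the class under unions of $\leq_p$-increasing chains. For that I would use Pontryagin's criterion. Given a $\leq_p$-increasing chain $\langle G_i:i<\delta\rangle$ of $\aleph_1$-free groups, each $G_i$ is pure in the torsion-free union $G_\delta=\bigcup_{i<\delta}G_i$, so for a finite set $F\subseteq G_\delta$, say $F\subseteq G_i$, the pure closure of $F$ computed in $G_\delta$ already lies in $G_i$ (purity of $G_i$ in the torsion-free group $G_\delta$), has finite rank and is therefore countable, and, being a subgroup of the $\aleph_1$-free group $G_i$, is free of finite rank and hence finitely generated. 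Since finite subsets of any countable subgroup $C\leq G_\delta$ then have finitely generated pure closure in $C$ (a subgroup of their finitely generated pure closure in $G_\delta$), Pontryagin's criterion gives that $C$ is free, so $G_\delta\in\K^{\aleph_1\text{-free}}$.

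Next I would identify Galois types with pp-types, which for this class is the torsion-free case of Lemma \ref{types}: for $G\in\K^{\aleph_1\text{-free}}$ with $G\leq_p G_1,G_2$ and $b_\ell\in G_\ell$, $\gtp(b_1/G;G_1)=\gtp(b_2/G;G_2)$ if and only if $\pp(b_1/G,G_1)=\pp(b_2/G,G_2)$. The forward implication is automatic, since $\K^{\aleph_1\text{-free}}$-embeddings are pure embeddings and pure embeddings preserve and reflect pp-formulas. For the converse --- the point at which one might fear having to amalgamate inside a class that fails amalgamation --- I would sidestep amalgamation entirely: each triple $(b_\ell,G,G_\ell)$ is $E_{\text{at}}^{\K}$-equivalent to the one obtained by replacing $G_\ell$ with the pure closure $\langle G\cup\{b_\ell\}\rangle_{*}\leq_p G_\ell$, which is again $\aleph_1$-free and of size at most $\|G\|+\aleph_0$; and a routine back-and-forth, using torsion-freeness to make the solutions of the equations $kx=nb_\ell+g$ (for $k\ge 1$, $n\in\mathbb{Z}$, $g\in G$) unique, shows that $\pp(b_1/G,G_1)=\pp(b_2/G,G_2)$ produces an isomorphism $\langle G\cup\{b_1\}\rangle_{*}\cong\langle G\cup\{b_2\}\rangle_{*}$ fixing $G$ pointwise and sending $b_1$ to $b_2$. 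Such an isomorphism directly witnesses $E_{\text{at}}^{\K}$. Hence $\gtp(b/G;\,\cdot\,)\mapsto\pp(b/G,\,\cdot\,)$ is a well-defined injection of $\gS_{\K^{\aleph_1\text{-free}}}(G)$ into the set of pp-$1$-types over $G$ realized in pure extensions of $G$.

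Finally I would bound, for $\|G\|=\lambda$, the number of realized pp-$1$-types over $G$ by $\lambda^{\aleph_0}$, just as in the proof of \cite[3.16]{kuma}: such a type is coded by the isomorphism type over $G$ of the pointed group $(\langle G\cup\{b\}\rangle_{*},G,b)$, and this is in turn determined by the rank-at-most-one quotient $A=\langle G\cup\{b\}\rangle_{*}/G$ (at most $2^{\aleph_0}$ possibilities), by the class of the extension $0\to G\to\langle G\cup\{b\}\rangle_{*}\to A\to 0$ in $\mathrm{Ext}^1(A,G)$ (at most $\lambda^{\aleph_0}$ possibilities, since $A$ admits a countable free presentation and hence $\mathrm{Ext}^1(A,G)$ is a subquotient of $G^{\aleph_0}$), and by the choice of the point modulo automorphisms over $G$ (at most $\lambda$ possibilities). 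Combined with the previous paragraph this yields $|\gS_{\K^{\aleph_1\text{-free}}}(G)|\le\lambda^{\aleph_0}$, hence $\le\lambda$ whenever $\lambda^{\aleph_0}=\lambda$, which is (4). The two places I expect to be the real work are this last count --- pushing the estimate for the number of pp-types below the trivial $2^{\lambda}$ to $\lambda^{\aleph_0}$, which is also exactly where the cardinal hypothesis is consumed --- and the Pontryagin-criterion verification in the second paragraph; by contrast, the identification of Galois types with pp-types for $\K^{\aleph_1\text{-free}}$ turns out, pleasantly, to be insensitive to the failure of amalgamation in this class.
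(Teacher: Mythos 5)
Your argument for item (4) is correct and takes the route the paper intends: the paper's own proof is only the sketch ``similar to \cite[5.8]{maz20}; it follows from the fact that $\aleph_1$-free groups are closed under pure subgroups,'' and your reduction of Galois-types to isomorphism types over $G$ of the pointed pure closures $(\langle G\cup\{b\}\rangle_*,G,b)$ --- which land back in the class precisely by that closure fact --- is exactly how that sketch is meant to be filled in, correctly sidestepping the failure of amalgamation in $\K^{\aleph_1\text{-free}}$. The only divergence is in the final count, where you re-derive the $\lambda^{\aleph_0}$ bound by hand via rank-one quotients and $\mathrm{Ext}^1(A,G)$ instead of quoting the $\lambda$-stability of $\K^{Ab}$ from item (2) together with the injection of type spaces, as the paper does for $\Kp$ in Lemma \ref{psta}; both are sound.
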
 
\begin{proof}[Proof sketch] The only bullet point that is missing references is (4). The proof is similar to the one of $\K^{B_0}$ (see \cite[5.8]{maz20}). It follows from the fact that $\aleph_1$-free groups are closed under pure subgroups.  \end{proof}

\begin{remark}
We will obtain in Lemma \ref{psta} (see below) that $\Kp$ is also $\lambda$-stable if $\lambda^{\aleph_0} = \lambda$. 
\end{remark}

As witnessed by Fact \ref{exstab}, Lemma \ref{psta} and Corollary \ref{last}, all the AECs of abelian groups and modules identified in this paper are stable. Moreover, a classical result of first-order model theory assures us that:

\begin{fact}[Fisher, Baur, see e.g. {\cite[3.1]{prest}}]\label{st-c}
If $T$ is a complete first-order theory extending the theory of modules, then $(Mod(T), \leq_p)$ is stable.  
\end{fact}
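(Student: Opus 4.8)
The plan is to establish the sharper statement that $(Mod(T), \leq_p)$ is $\lambda$-stable for $\lambda = 2^{|T| + \aleph_0}$. Since this $\lambda$ satisfies $\lambda^{|T| + \aleph_0} = (2^{|T|+\aleph_0})^{|T|+\aleph_0} = 2^{|T|+\aleph_0} = \lambda$ and $\lambda \geq |T| + \aleph_0 \geq \LS((Mod(T), \leq_p))$, $\lambda$-stability gives stability at once.

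The first step is to identify the Galois types of $(Mod(T), \leq_p)$ with pp-types, exactly as in the second bullet of Example \ref{etype}: for $M \leq_p N_1, N_2$ in $Mod(T)$, $b_1 \in N_1$ and $b_2 \in N_2$, I would show $\gtp(b_1/M; N_1) = \gtp(b_2/M; N_2)$ if and only if $\pp(b_1/M, N_1) = \pp(b_2/M, N_2)$. The forward implication is routine: pure embeddings preserve and reflect pp-formulas, so any two $E_{\text{at}}$-equivalent triples have the same pp-type, and since equality of pp-types is already transitive this passes to the transitive closure $E$. For the converse I would use that $(Mod(T), \leq_p)$ has the amalgamation property: amalgamating $N_1$ and $N_2$ over $M$ inside a sufficiently saturated $N \models T$ and using pp-type matching (e.g. by realizing pp-types via saturation), one arranges the amalgamation so that the images of $b_1$ and $b_2$ coincide. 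This is the argument of \cite[3.14]{kuma} for $\K^{Ab}$ and $\K^{tf}$, and of Lemma \ref{types} below for $\Kp$; it works verbatim for an arbitrary complete theory of modules (cf. also \cite[\S 2]{prest}).

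The second step is to count pp-types over a fixed $M \in Mod(T)$ with $\| M \| = \lambda$. The key is a coset lemma: if $M \leq_p N$ and $b \in N$, then for each pp-formula $\phi(x, \bar y)$, where $\bar y$ is a finite tuple of parameter variables, the set $\{ \bar a \in M^{|\bar y|} : N \models \phi(b, \bar a) \}$ is either empty or a coset of the subgroup $\{ \bar a \in M^{|\bar y|} : M \models \phi(0, \bar a) \}$; this holds because $\phi(N)$ is a subgroup of $N^{1 + |\bar y|}$ (so the differences of its elements lying above $b$ are exactly its elements lying above $0$) and because $\phi(0, \bar y)$ is again a pp-formula (so by purity its trace on $M^{|\bar y|}$ is computed inside $M$). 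Such a subgroup has at most $\lambda$ cosets, since these partition a set of cardinality $\lambda$; there are at most $|T| + \aleph_0$ pp-formulas up to $T$-equivalence; and $\pp(b/M, N)$ is recovered from the assignment carrying each $\phi$ to the corresponding coset (or to $\emptyset$). Hence there are at most $\lambda^{|T| + \aleph_0} = \lambda$ pp-types over $M$, and therefore, by the first step, $|\gS_{(Mod(T), \leq_p)}(M)| \leq \lambda$, as required.

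I expect the only genuine obstacle to be the converse half of the first step --- that equal pp-type forces equal Galois type --- since that is where the algebra of pure embeddings (amalgamation together with the pp-type-matching construction) really enters; the coset lemma and the cardinal arithmetic are routine. An alternative would be to invoke the classical first-order stability of complete theories of modules (Fisher--Baur) directly, but transferring that bound to the $\leq_p$-AEC still passes through the identification of Galois types with pp-types, so it would not genuinely shorten the proof.
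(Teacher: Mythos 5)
The paper offers no proof of this statement at all: it is quoted as a classical theorem of Fisher and Baur with a pointer to \cite[3.1]{prest}, and the only related argument in the paper is the remark that \cite[3.16]{kuma} extends it to incomplete theories closed under direct sums. So there is nothing internal to compare against; judged on its own, your argument is correct and is essentially the textbook proof of Baur--Fisher, transported to Galois types. Both halves check out: the coset lemma is the standard observation that $\phi(b,N)$ is a coset of $\phi(0,N)$ and that purity lets you compute $\phi(0,M^{|\bar y|})$ inside $M$, giving at most $\lambda^{|T|+\aleph_0}$ pp-types over a model of size $\lambda$; and the identification of Galois types with pp-types goes through because, for a complete theory of modules, the Baur--Monk pp-elimination of quantifiers makes the pp-type over any parameter set determine the full first-order type, so matching pp-types inside a saturated amalgam suffices. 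One streamlining worth noting: that same pp-elimination shows that a pure embedding between two elementarily equivalent modules is elementary, so for complete $T$ the class $(Mod(T),\leq_p)$ coincides with $(Mod(T),\preceq)$; Galois types over models are then literally first-order syntactic types, and the fact reduces in one line to classical first-order stability of $T$ --- which is the alternative you mention at the end and correctly observe carries the same mathematical content. Your chosen target cardinal $\lambda=2^{|T|+\aleph_0}$ does satisfy $\lambda^{|T|+\aleph_0}=\lambda\geq\LS((Mod(T),\leq_p))$, so the cardinal arithmetic closes the argument.
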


The same result is obtained for incomplete first-order theories of modules closed under direct sums in \cite[3.16]{kuma}. So we ask if the above is still true for all AECs of modules.

\begin{question} 
Let $R$ be an associative ring with an identity element.

If $(K, \leq_p)$ is an AEC such that $K \subseteq R\text{-Mod}$, is $(K, \leq_p)$ stable? Is this true if $R= \mathbb{Z}$?
\end{question}

Let us recall the classical notion of a universal model. 
\begin{defin}
Let $\K$ be an AEC and $\lambda$ be an infinite cardinal. We say that $M \in \K$ is a \emph{universal model in
$\K_\lambda$} if $M \in \K_\lambda$ and if given any $N \in \K_\lambda$, there is a $\K$-embedding $f: N \to M$. We say that $\K$ has a universal model of cardinality $\lambda$ if there is a universal model in $\K_\lambda$. 
\end{defin}

\begin{remark} Let $p$ be a prime number. Observe that the phrase \emph{there is a universal abelian $p$-group for purity of cardinality $\lambda$} means precisely that \emph{$\Kp$ has a universal model of cardinality $\lambda$}. We will use the latter in the rest of the paper.
\end{remark}

One of the nice consequences of being stable is that it is easy to build universal models in many cardinals.

\begin{fact}[{\cite[3.20]{kuma}}]\label{universal}
Let $\K$ be an AEC with joint embedding, amalgamation and no maximal
models. Assume there is $\theta_0 \geq \LS(\K)$ and $\kappa$ such
that for all $\theta \geq \theta_0$, if $\theta^{\kappa} =
\theta$, then $\K$  is $\theta$-stable.

Suppose $\lambda > \theta_0$. If $\lambda^{\kappa}=\lambda$ or
$\forall \mu < \lambda( \mu^{\kappa} < \lambda)$, then $\K$
has a universal model of cardinality $\lambda$.
\end{fact}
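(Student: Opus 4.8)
The plan is to run the classical construction of universal models from stability. First I would use the cardinal-arithmetic hypothesis to reduce to having enough stability. If $\lambda$ is regular, both hypotheses yield $\lambda^\kappa=\lambda$: in the case $\forall\mu<\lambda\,(\mu^\kappa<\lambda)$ one first notes $\kappa<\lambda$ (otherwise $2^\kappa\geq\lambda$ contradicts $\mu^\kappa<\lambda$ for $2\leq\mu<\lambda$), so every function $\kappa\to\lambda$ is bounded and $\lambda^\kappa=\sum_{\mu<\lambda}\mu^\kappa=\lambda$; hence $\K$ is $\lambda$-stable by hypothesis. If $\lambda$ is singular (hence a limit cardinal), then either $\mu^\kappa<\lambda$ for all $\mu<\lambda$ --- in which case, passing to successors, the regular cardinals $\mu$ with $\theta_0\leq\mu<\lambda$ and $\mu^\kappa=\mu$ are cofinal in $\lambda$ and $\K$ is stable at each of them --- or else $\mu^\kappa=\lambda$ for all large $\mu<\lambda$, forcing $\lambda^\kappa=\lambda$ and hence $\lambda$-stability of $\K$. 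I would also record the enabling fact that, by amalgamation and the extension property of Galois types, $\lambda$-stability bounds $|\gS(M)|$ by $\lambda$ for \emph{every} $M\in\K$ with $\|M\|\leq\lambda$, not merely for $M$ of cardinality exactly $\lambda$.

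When $\lambda$ is regular this suffices. Using $\lambda$-stability I would build an increasing continuous chain $\langle M_i:i<\lambda\rangle$ in $\K_\lambda$ in which $M_{i+1}$ realizes all of the at most $\lambda$ Galois types over $M_i$ --- amalgamation lets a prescribed Galois type be realized inside a single model, and $M_{i+1}$ is a union of $\lambda$ such realizations --- and put $M=\bigcup_{i<\lambda}M_i\in\K_\lambda$. Regularity of $\lambda$ forces every $\lea$-submodel of $M$ of cardinality $<\lambda$ into some $M_i$ (by coherence), so its Galois types are realized in $M$; thus $M$ is $\lambda$-saturated, hence $\lambda$-model-homogeneous (using $\lambda>\LS(\K)$), hence universal in $\K_\lambda$: given $N'\in\K_\lambda$, filter it as a continuous union of $\lea$-submodels of cardinality $<\lambda$ and build the embedding stage by stage, extending at successors by model-homogeneity and at limits by smoothness.

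When $\lambda$ is singular I would run a chain of length $\cof(\lambda)$. If there are regular stability cardinals $>\theta_0$ cofinal in $\lambda$, fix such a sequence $\langle\mu_i:i<\cof(\lambda)\rangle$ with each $\mu_{i+1}$ above $\sum_{j\leq i}\mu_j$, and build a continuous chain $\langle M_i:i<\cof(\lambda)\rangle$ with $M_0$ a $\mu_0$-saturated model of cardinality $\mu_0$ and each $M_{i+1}$ a $\mu_{i+1}$-saturated model of cardinality $\mu_{i+1}$ extending $M_i$ (the bookkeeping keeps $\|M_i\|\leq\mu_i$). Then $M=\bigcup_i M_i$ has cardinality $\lambda$ and is universal in $\K_\lambda$: filter an arbitrary $N'\in\K_\lambda$ continuously as $\bigcup_i A_i$ with $\|A_i\|\leq\mu_i$, and build increasing continuous $\lea$-embeddings $f_i\colon A_i\to M$ with $f_i[A_i]\lea M_i$, absorbing each $A_{i+1}$ (transported along $f_i$) into $M_{i+1}$ by amalgamating with $M_i$ over $f_i[A_i]\lea M_i$ and using that $\mu_{i+1}$-model-homogeneity makes $M_{i+1}$ universal over $M_i$ among extensions of cardinality $\leq\mu_{i+1}$; then $\bigcup_i f_i$ is the desired $\K$-embedding, by smoothness. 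In the remaining case ($\lambda$ singular, only $\lambda$-stability available) one runs the analogous $\cof(\lambda)$-chain directly inside $\K_\lambda$, realizing Galois types at each stage and relying on the bound $|\gS(M)|\leq\lambda$; this variant is the most delicate.

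I expect the main obstacle to be exactly this singular analysis: $\lambda$-stability by itself does not produce a $\lambda$-saturated model of cardinality $\lambda$ once $\lambda$ is singular, so one must glue together the partially saturated approximations supplied by stability at the regular cardinals below $\lambda$, keeping the cardinality bookkeeping --- every intermediate model of size $\leq\mu_i$, and an arbitrary target filtered to match the $\mu_i$ --- coherent enough that the direct limits of the approximating embeddings remain $\K$-embeddings. Everything else is routine use of amalgamation, joint embedding, the L\"{o}wenheim--Skolem--Tarski axiom, coherence and smoothness.
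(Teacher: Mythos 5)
The paper itself gives no proof of this statement: it is imported verbatim from \cite[3.20]{kuma}, so your attempt can only be compared with the standard argument behind that citation. Your cardinal-arithmetic reduction is correct and exhaustive, and two of your three cases are sound: ``$\lambda$ regular'' (a length-$\lambda$ chain realizing types, giving a saturated hence model-homogeneous hence universal model) and ``$\lambda$ singular with stability cofinally below $\lambda$'' (gluing $\mu_i$-saturated models along a $\cof(\lambda)$-chain). The genuine gap is the third case, which you flag as ``the most delicate'' and then do not carry out: $\lambda$ singular with only $\lambda$-stability available. This case is non-vacuous --- take $\kappa=\aleph_0$ and $\lambda=2^{\aleph_0}=\aleph_{\omega_1}$, which is consistent; then $\lambda^{\aleph_0}=\lambda$, $\lambda$ is singular, and no $\theta<\lambda$ satisfies $\theta^{\aleph_0}=\theta$, so no stability below $\lambda$ is guaranteed. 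Your proposed construction there, a chain $\langle M_i : i<\cof(\lambda)\rangle$ in $\K_\lambda$ in which $M_{i+1}$ merely realizes all Galois types over $M_i$, does not suffice: at a successor stage you must absorb a piece $A_{i+1}$ of the target of size possibly close to $\lambda$ over $f_i[A_i]$, and doing so element by element requires realizing types over the growing intermediate images, which are not $\lea$-submodels of $M_i$. Realizing all types over $M_i$ in a single step does not make $M_{i+1}$ universal over $M_i$.

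The missing ingredient is the lemma that $\lambda$-stability together with amalgamation and no maximal models yields, for every $M_0\in\K_\lambda$, a model $N\in\K_\lambda$ universal over $M_0$, proved in a way that is insensitive to $\cof(\lambda)$. The standard argument builds an increasing continuous chain $\langle M_j : j<\lambda\rangle$ of length $\lambda$ (not $\cof(\lambda)$) with $M_{j+1}$ realizing all of $\gS(M_j)$, and embeds a given extension $N\supseteq M_0$ by a pull-back: enumerate $|N|\setminus|M_0|$ as $\langle a_j : j<\lambda\rangle$ and construct increasing continuous embeddings $f_j\colon M_j\to N_j$ into a growing chain of extensions of $N$ with $f_0=\id_{M_0}$ and $a_j\in\operatorname{ran}(f_{j+1})$; at stage $j$ one realizes in $M_{j+1}$ the $f_j$-pullback of $\gtp(a_j/f_j[M_j];N_j)$, a type over the \emph{whole} of $M_j$, so that stability at $\lambda$ alone is used and one element is handled per step. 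At the end $f_\lambda^{-1}\rest N$ embeds $N$ into $\bigcup_{j<\lambda}M_j$ over $M_0$ by coherence. Once this lemma is in place, joint embedding and L\"owenheim--Skolem make that single model universal in $\K_\lambda$ in every case where $\K$ is $\lambda$-stable, regular or singular, and your saturation machinery is needed only in your second case, where stability holds merely cofinally below $\lambda$.
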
 

This is all the theory of AECs that is required to follow the proof of the main theorem (Theorem \ref{main}). Connections between algebra and AECs were studied in: \cite{grsh0}, \cite{grsh}, \cite{baldwine}, \cite{satr}, \cite{maz20}, \cite{kuma}, \cite{grp}, \cite[\S 5]{bontor}, \cite[\S 6]{lrv}, \cite{maz1} and \cite{maz2}. Finally, the reader interested in a more in depth introduction to AECs can consult: \cite{grossberg2002}, \cite{baldwinbook09} and \cite{shelahaecbook}.

\section{Main result}

In this section we will study the class of abelian $p$-groups with pure embeddings for any prime number $p$. We introduced these classes as the third bullet point of Example \ref{ex1}. Following the notation of Example \ref{ex1}, we will denote them by $\Kp$ for any prime number $p$.

The following assertion contains many known facts about abelian $p$-groups. 
\begin{fact}\label{bas} Assume $p$ is a prime number.
\begin{enumerate}
\item $\Kp=(\Kkp, \leq_p)$ is an AEC with $\LS(\Kp)= \aleph_0$.
\item $\Kp$ has the amalgamation property, the joint embedding property and no maximal models.
\item The class of abelian $p$-groups is not closed under pure-injective envelopes.
\item The class of abelian $p$-groups is not first-order axiomatizable. 

\end{enumerate}
\end{fact}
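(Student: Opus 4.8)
The plan is to verify the four items essentially by assembling standard facts about abelian $p$-groups and the general theory of purity, together with a few short arguments.

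For (1), I would check the AEC axioms of Definition \ref{aec-def} for $\Kp = (\Kkp, \leq_p)$ directly. The ordering $\leq_p$ clearly extends the submodule relation and is a partial order; isomorphism invariance (axiom 3) is immediate since purity is preserved under isomorphism. Coherence (axiom 4): if $G_0 \leq_p G_2$, $G_1 \leq_p G_2$ and $G_0 \subseteq G_1$, then for each $n$ we have $nG_1 \cap G_0 \subseteq nG_2 \cap G_0 = nG_0$, so $G_0 \leq_p G_1$. For the Tarski--Vaught axioms (axiom 5), I would use that a directed union of abelian $p$-groups is an abelian $p$-group (every element lies in some member of the chain, hence has order a power of $p$), that a directed union of pure subgroups is pure (check the purity equation $nG_\delta \cap G_i = nG_i$ elementwise), and smoothness: if $G_i \leq_p N$ for all $i < \delta$ then $nG_\delta \cap G_\delta = \bigcup_i (nG_i \cap G_i)$ and one checks the union sits purely in $N$ the same way. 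The L\"owenheim--Skolem--Tarski number: given $G \in \Kkp$ and $A \subseteq |G|$, close $A$ under the group operations to get a subgroup $G_0$ with $|G_0| \leq |A| + \aleph_0$; a subgroup of a $p$-group is a $p$-group; and a subgroup is automatically pure in a $p$-group when it is a direct summand or, more carefully, one may need to enlarge $A$ slightly to arrange purity --- the standard trick is that one can always build a pure subgroup of size $|A| + \aleph_0$ containing $A$ by a back-and-forth of length $\omega$ absorbing witnesses to divisibility, so $\LS(\Kp) = \aleph_0$.

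For (2): joint embedding and no maximal models follow, as noted in Example \ref{ex2}, because $\Kkp$ is closed under direct sums --- given $G, H \in \Kkp$, the group $G \oplus H$ is an abelian $p$-group and both summands are pure in it, giving joint embedding; and $G \oplus \mathbb{Z}/p\mathbb{Z}$ properly and purely extends $G$, giving no maximal models. Amalgamation is the item I expect to require the most care: given $G \leq_p G_1$ and $G \leq_p G_2$, I would either cite the classical fact that the class of abelian $p$-groups has the amalgamation property for pure embeddings (this is well known; one route is via the pure-injective hull, pushing out inside a large pure-injective and then cutting down, or directly via the pushout $G_1 \oplus_G G_2$ and checking the maps are pure), or reduce it to amalgamation in $\K^{Ab}$ (known, e.g. from \cite{kuma} via pp-types) by observing that the amalgam produced there for $p$-groups stays a $p$-group. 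The main obstacle is checking purity of the legs of the pushout; the cleanest argument embeds everything into a pure-injective (hence algebraically compact) abelian $p$-group is \emph{not} available in general --- which is precisely the content of item (3) --- so one must instead embed into a pure-injective abelian group (not necessarily a $p$-group), amalgamate there, and then replace the amalgam by its torsion part, which is an abelian $p$-group containing pure copies of $G_1$ and $G_2$.

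For (3), I would exhibit an abelian $p$-group whose pure-injective (algebraically compact) envelope contains elements of infinite height that are not of bounded order, hence is not a $p$-group: the standard example is $G = \bigoplus_{n<\omega} \mathbb{Z}/p^n\mathbb{Z}$, whose pure-injective hull is the $p$-adic completion-type module $\prod_n \mathbb{Z}/p^n\mathbb{Z}$ restricted appropriately, which contains a copy of the $p$-adic integers $\mathbb{Z}_p$ and thus torsion-free elements --- so it is not a $p$-group. For (4), non-first-order-axiomatizability follows by a compactness/ultraproduct argument: the groups $\mathbb{Z}/p^n\mathbb{Z}$ are abelian $p$-groups, but a non-principal ultraproduct $\prod_{n}\mathbb{Z}/p^n\mathbb{Z}/\mathcal{U}$ contains an element whose order is not any finite power of $p$ (the class of the ``diagonal'' element $(1,1,1,\dots)$ has no finite order), hence is not a $p$-group, so $\Kkp$ is not closed under ultraproducts and therefore not elementary by the Keisler--Shelah theorem.
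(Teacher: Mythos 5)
Your proposal is correct in substance, and it is considerably more detailed than the paper's own (very terse) proof, but it diverges from the paper at two points worth comparing. For item (2), the paper simply invokes closure of abelian $p$-groups under direct sums \emph{and pushouts}: the pushout $G_1 \oplus_G G_2$ is a quotient of $G_1 \oplus G_2$, hence a $p$-group, and the standard fact that pushouts of pure monomorphisms of modules are pure monomorphisms settles purity of the legs in one line --- so the detour through a pure-injective ambient group that you sketch as the ``cleanest'' route is not needed. (If you do take that detour, note one imprecision: the torsion part of an arbitrary abelian group is a torsion group, not a $p$-group; you want the $p$-primary component $t_p(H)$, or more simply the subgroup $f_1[G_1]+f_2[G_2]$, which is automatically a $p$-group --- this is exactly what the paper does later in the proof of Theorem \ref{ptypes}.) For item (4), your argument is genuinely different: you show non-closure under ultraproducts via the diagonal element of $\prod_n \mathbb{Z}(p^n)/\mathcal{U}$ (for which \L{}o\'s's theorem already suffices; Keisler--Shelah is overkill), whereas the paper deduces (4) directly from (3) using the fact that the pure-injective envelope of a module is an elementary extension, so an elementary class of modules must be closed under pure-injective envelopes. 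Both arguments are valid; the paper's is shorter given (3), while yours is self-contained and does not rely on the elementarity of pure-injective envelopes. Your treatments of (1) and (3) match the paper's (the paper declares (1) trivial and uses the same example $\oplus_n \mathbb{Z}(p^n)$ with hull $\prod_n \mathbb{Z}(p^n)$ for (3); your hedge ``restricted appropriately'' is unnecessary, as the full product is the pure-injective hull).
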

\begin{proof}
(1) is trivial and (2) follows from the closure of abelian $p$-groups under direct sums and pushouts. As for closure under pure-injective envelopes, recall that the pure-injective envelope of  $\oplus_n \mathbb{Z}(p^n)$ is $\Pi_n \mathbb{Z}(p^n)$, which is not an abelian $p$-group. That the class of abelian $p$-groups is not first-order axiomatizable follows from the last line together with the fact that the pure-injective envelope of a module is an elementary extension.
\end{proof}

\begin{remark}
Items (3) and (4) of the above assertion will not be used, but hint to the difficulty of dealing with this class from a model theoretic perspective.
\end{remark}

\begin{remark}
It is worth mentioning that if the Generalized Continuum Hypothesis (GCH) holds\footnote{GCH states that $2^\lambda = \lambda^+$ for every infinite cardinal $\lambda$.}, then just from the fact that $\Kp$ is an AEC with amalgamation, joint embedding and no maximal models, one can easily show that $\Kp$ has a universal model of cardinality $\lambda$ for every $\lambda$ uncountable cardinal. 
\end{remark}

We begin by characterizing Galois-types in the class of abelian $p$-groups.  Recall, from Example \ref{ex1}, that $\K^{Ab}$ is the class of abelian groups with pure embeddings. 

\begin{theorem}\label{ptypes} Assume $p$ is a prime number.
 Let $G_1, G_2 \in \Kkp$, $A \subseteq G_1, G_2$, $\bar{b}_1 \in G_1^{< \omega}$ and $ \bar{b}_2 \in G_2 ^{< \omega}$, then:
\[ \gtp_{\Kab}(\bar{b}_1/A; G_1) = \gtp_{\Kab}(\bar{b}_2/A; G_2) \text{ if and only if } \gtp_{\Kp}(\bar{b}_1/A; G_1) = \gtp_{\Kp}(\bar{b}_2/A; G_2).\]
\end{theorem}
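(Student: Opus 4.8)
The plan is to prove the two directions separately, using the fact that a $\K^{Ab}$-embedding and a $\Kp$-embedding are both just pure monomorphisms of abelian groups; the only difference between the two classes is which groups are allowed to serve as the ambient witness $N$. This immediately gives one direction for free: if $\gtp_{\Kp}(\bar b_1/A;G_1) = \gtp_{\Kp}(\bar b_2/A;G_2)$, then there is a finite chain of $E_{\text{at}}^{\Kp}$-steps connecting the two triples, and since every abelian $p$-group is in particular an abelian group, each such step is also an $E_{\text{at}}^{\Kab}$-step. Hence $\gtp_{\Kab}(\bar b_1/A;G_1) = \gtp_{\Kab}(\bar b_2/A;G_2)$. (Here I am tacitly using that $A$, as a subset of an abelian $p$-group, generates a $p$-subgroup, so there is no mismatch in the base.)

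For the nontrivial direction, suppose $\gtp_{\Kab}(\bar b_1/A;G_1) = \gtp_{\Kab}(\bar b_2/A;G_2)$. By the characterization of Galois-types in $\K^{Ab}$ recalled in Example \ref{etype} (the \cite[3.14]{kuma} bullet), this is equivalent to $\pp(\bar b_1/A, G_1) = \pp(\bar b_2/A, G_2)$, i.e. the two tuples satisfy exactly the same positive primitive formulas with parameters from $A$. I then want to conclude $\gtp_{\Kp}(\bar b_1/A;G_1) = \gtp_{\Kp}(\bar b_2/A;G_2)$. Since $\Kp$ has amalgamation (Fact \ref{bas}), it suffices to exhibit a single abelian $p$-group $N$ with $\Kp$-embeddings $f_\ell : G_\ell \to_A N$ such that $f_1(\bar b_1) = f_2(\bar b_2)$: a one-step $E_{\text{at}}^{\Kp}$ equivalence is enough. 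The construction of such an $N$ is the heart of the argument. One clean way: by amalgamation in $\K^{Ab}$ we get an abelian group $H$ and pure embeddings $g_\ell : G_\ell \to_A H$ with $g_1(\bar b_1) = g_2(\bar b_2)$ (this is what the equality of pp-types buys us, via the standard model-theoretic amalgamation of modules over a common pp-type). Now $H$ need not be a $p$-group, so I pass to its torsion subgroup, or more precisely to its $p$-torsion part $t_p(H) = \{h \in H : p^n h = 0 \text{ for some } n\}$.

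The key claims to verify are: (i) $t_p(H)$ is an abelian $p$-group — immediate; (ii) $t_p(H)$ is a pure subgroup of $H$ — this holds because the $p$-primary component is always pure in an abelian group (indeed the torsion subgroup is pure, and the $p$-component is a direct summand of the torsion subgroup); (iii) $g_\ell[G_\ell] \subseteq t_p(H)$, which holds because $G_\ell$ is a $p$-group and $g_\ell$ is a homomorphism; (iv) $g_\ell : G_\ell \to t_p(H)$ is still a \emph{pure} embedding — this follows from (ii) and (iii) by the transitivity/restriction behaviour of purity ($G_\ell$ pure in $H$ and $G_\ell \subseteq t_p(H) \leq_p H$ forces $G_\ell \leq_p t_p(H)$, using the coherence-type property of purity which in AEC language is exactly the coherence axiom applied to $\K^{Ab}$). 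Setting $N := t_p(H)$ and $f_\ell := g_\ell$ then witnesses $E_{\text{at}}^{\Kp}$, completing the proof. I expect step (iv) — checking that purity is preserved on passing to the $p$-torsion subgroup — to be the main point requiring care, though it is a standard fact about abelian groups; everything else is bookkeeping about the two definitions of Galois-type coinciding once the ambient model is pinned down.
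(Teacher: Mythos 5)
Your proof is correct and follows essentially the same strategy as the paper: obtain a single amalgam $H$ in $\K^{Ab}$ witnessing the equality of Galois-types, then cut down to an abelian $p$-subgroup of $H$ containing both images, in which the images remain pure by the coherence property of purity. The only (inessential) differences are that you take $t_p(H)$ where the paper takes $f_1[G_1]+f_2[G_2]$, and that you route the existence of the amalgam through the pp-type characterization of \cite[3.14]{kuma} rather than directly through amalgamation in $\K^{Ab}$.
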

\begin{proof}
The backward direction is trivial as $\Kab$ and $\Kp$ are both AECs with respect to pure embeddings. We prove the forward direction. Assume that  $\gtp_{\Kab}(\bar{b}_1/A; G_1) = \gtp_{\Kab}(\bar{b}_2/A; G_2)$, then by definition of Galois-types there are $G \in K^{Ab}$, $f_1: G_1 \to G$ and $f_2: G_2 \to G$ such that $f_1\rest_A=f_2\rest_A$ and $f_1(\bb_1)=f_2(\bb_2)$. Let $L = f[G_1] + f[G_2] \leq G$. Observe that $f[G_1] + f[G_2]$ is an abelian $p$-group and that$f[G_1], f[G_2] \leq_p f[G_1] + f[G_2]$ since $f_1, f_2$ are pure embeddings. Therefore, the following square is a commutative square in $\Kp$: 
\[
 \xymatrix{\ar @{} [dr] G_2  \ar[r]^{f_1 \hspace{1cm}}  & f_1[G_1] + f_2[G_2] \\
A \ar[u]^{\id} \ar[r]_{\id} & G_2 \ar[u]_{f_2}
} 
\]

and $f_1(\bb_1)=f_2(\bb_2)$. Hence $\gtp_{\Kp}(\bar{b}_1/A; G_1) = \gtp_{\Kp}(\bar{b}_2/A; G_2)$. \end{proof}

We show that $\Kp$ is stable for every prime number $p$.

\begin{lemma}\label{psta} Assume $p$ is a prime number.
If $\lambda^{\aleph_0}= \lambda$, then $\Kp$ is $\lambda$-stable.
\end{lemma}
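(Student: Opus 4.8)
The plan is to reduce the stability of $\Kp$ to the already-known stability of $\K^{Ab}$ (Fact \ref{exstab}(2)) by using the characterization of Galois-types just established in Theorem \ref{ptypes}. Fix $\lambda$ with $\lambda^{\aleph_0} = \lambda$ and fix $G \in \Kkp$ with $\|G\| = \lambda$; I must bound $|\gS_{\Kp}(G)|$ by $\lambda$. Since $G$ is in particular an abelian group, $G \in K^{Ab}$, and I may apply $\lambda$-stability of $\K^{Ab}$ to get $|\gS_{\Kab}(G)| \leq \lambda$. So it suffices to produce an injection (or at least a $\leq$-bound) from $\gS_{\Kp}(G)$ into $\gS_{\Kab}(G)$.

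The map is the obvious one: given $p = \gtp_{\Kp}(b/G; N)$ with $G \leq_p N \in \Kkp$ and $b \in N$, send it to $\gtp_{\Kab}(b/G; N)$, which makes sense because $N \in K^{Ab}$ as well. First I would check this is well-defined: if $\gtp_{\Kp}(b_1/G; N_1) = \gtp_{\Kp}(b_2/G; N_2)$ then by Theorem \ref{ptypes} (the easy backward direction, or rather directly from the definitions of Galois type in the two classes, since a $\Kp$-amalgam is in particular a $\K^{Ab}$-amalgam) we get $\gtp_{\Kab}(b_1/G; N_1) = \gtp_{\Kab}(b_2/G; N_2)$. Then I would check injectivity, which is exactly the forward direction of Theorem \ref{ptypes}: if the images agree, then $\gtp_{\Kp}(b_1/G;N_1) = \gtp_{\Kp}(b_2/G;N_2)$. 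Hence $|\gS_{\Kp}(G)| \leq |\gS_{\Kab}(G)| \leq \lambda$, as desired.

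One small subtlety I would address: to invoke Fact \ref{exstab}(2) I only need that $G$, as a member of $K^{Ab}$, has cardinality $\lambda$, which is immediate. There is no issue with realizations of the $\K^{Ab}$-type living outside $\Kkp$ — I am only using $\K^{Ab}$-stability as an abstract cardinality bound on the type space, and the injection above lands inside $\gS_{\Kab}(G)$ regardless of what the ambient models are. I should also note that Theorem \ref{ptypes} is stated for finite tuples $\bar b$, but here I only need the length-one case, so there is nothing extra to prove.

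Honestly there is no real obstacle: the entire content has been front-loaded into Theorem \ref{ptypes}, and this lemma is the one-line corollary ``types are the same, so the type spaces have the same size, so stability transfers.'' If I wanted to be slightly more careful, the only place to be pedantic is making sure the correspondence $\gtp_{\Kp}(b/G;N) \mapsto \gtp_{\Kab}(b/G;N)$ respects the transitive-closure structure of $E^{\K}$ in both directions, but that is subsumed by the statement of Theorem \ref{ptypes}, which is already phrased at the level of equality of Galois-types. So the proof is simply: let $G \in (\Kp)_\lambda$; the assignment above is a well-defined injection of $\gS_{\Kp}(G)$ into $\gS_{\Kab}(G)$ by Theorem \ref{ptypes}; and $|\gS_{\Kab}(G)| \leq \lambda$ by Fact \ref{exstab}(2) since $\lambda^{\aleph_0} = \lambda$; therefore $|\gS_{\Kp}(G)| \leq \lambda$.
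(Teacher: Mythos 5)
Your proof is correct and follows exactly the paper's argument: both define the map $\gtp_{\Kp}(b/G;N) \mapsto \gtp_{\Kab}(b/G;N)$, use Theorem \ref{ptypes} to see it is a well-defined injection of $\gS_{\Kp}(G)$ into $\gS_{\Kab}(G)$, and then invoke $\lambda$-stability of $\K^{Ab}$ from Fact \ref{exstab}(2). Your additional remarks on well-definedness and on only needing the length-one case are sensible but do not change the substance.
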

\begin{proof}
Let $G \in \Kkp_\lambda$ and  $\{ \gtp_{\Kp}(b_i/G ; G_i) : i < \alpha \}$ be an enumeration without repetitions of $\gS_{\Kp}(G)$. Let $\Phi: gS_{\Kp}(G) \to gS_{\Kab}(G)$ be given by $\Phi(\gtp_{\Kp}(b_i/G ; G_i))=\gtp_{\Kab}(b_i/G ; G_i)$. Then by Theorem \ref{ptypes}, $\Phi$ is a well-defined injective function, so $| \gS_{\Kp}(G) | \leq |gS_{\Kab}(G)|$. Then as $\Kab$ is $\lambda$-stable, by Fact \ref{exstab}.(2), we conclude that $| \gS_{\Kp}(G) | \leq \lambda$.
\end{proof}

With this we are able to obtain many cardinals such that there are universal models for abelian $p$-groups for purity.

\begin{theorem}\label{main} Let $p$ be a prime number.
If $\lambda^{\aleph_0}=\lambda$ or $\forall \mu < \lambda(
\mu^{\aleph_0} < \lambda)$, then $\Kp$ has a universal model of cardinality $\lambda$.
\end{theorem}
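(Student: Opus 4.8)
The plan is to deduce Theorem~\ref{main} directly from the general AEC machinery by checking that $\Kp$ meets the hypotheses of Fact~\ref{universal}. That fact requires an AEC with joint embedding, amalgamation and no maximal models, together with a uniform stability statement: the existence of $\theta_0 \geq \LS(\K)$ and a cardinal $\kappa$ so that $\K$ is $\theta$-stable whenever $\theta \geq \theta_0$ and $\theta^\kappa = \theta$. Given those inputs, the conclusion is exactly that a universal model of cardinality $\lambda$ exists whenever $\lambda > \theta_0$ and either $\lambda^\kappa = \lambda$ or $\forall \mu < \lambda(\mu^\kappa < \lambda)$.

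First I would invoke Fact~\ref{bas}: $\Kp = (\Kkp, \leq_p)$ is an AEC with $\LS(\Kp) = \aleph_0$, and it has the amalgamation property, the joint embedding property and no maximal models. So the structural hypotheses of Fact~\ref{universal} are satisfied. Next I would supply the stability input by taking $\kappa = \aleph_0$ and $\theta_0 = \aleph_0$ (so $\theta_0 \geq \LS(\Kp)$): by Lemma~\ref{psta}, for every prime $p$, $\Kp$ is $\theta$-stable whenever $\theta^{\aleph_0} = \theta$. Thus the clause ``for all $\theta \geq \theta_0$, if $\theta^\kappa = \theta$ then $\K$ is $\theta$-stable'' holds verbatim.

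Finally I would apply Fact~\ref{universal} with these choices. For any $\lambda$ as in the hypothesis of Theorem~\ref{main}, i.e. $\lambda^{\aleph_0} = \lambda$ or $\forall \mu < \lambda(\mu^{\aleph_0} < \lambda)$, we need $\lambda > \theta_0 = \aleph_0$; this is automatic since both cardinal hypotheses force $\lambda$ to be uncountable (if $\lambda = \aleph_0$ then $\lambda^{\aleph_0} = 2^{\aleph_0} > \aleph_0$, and the second alternative with $\mu$ ranging over finite cardinals plus $\aleph_0$ itself cannot hold at $\lambda = \aleph_0$). Hence Fact~\ref{universal} yields a universal model of $\Kp$ of cardinality $\lambda$, which by the Remark identifying ``universal model of $\Kp$'' with ``universal abelian $p$-group for purity'' is exactly the desired conclusion.

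There is essentially no obstacle here: the theorem is a packaging result, and all the genuine work has been isolated into Fact~\ref{bas} (structural properties) and Lemma~\ref{psta} (stability under $\lambda^{\aleph_0} = \lambda$), which in turn rests on the type-comparison Theorem~\ref{ptypes} and the known stability of $\K^{Ab}$ from Fact~\ref{exstab}.(2). The only points needing a line of care are matching the parameters $(\theta_0, \kappa) = (\aleph_0, \aleph_0)$ to the hypotheses of Fact~\ref{universal} and noting that $\lambda > \aleph_0$ is forced, so that the ``$\lambda > \theta_0$'' side condition is met.
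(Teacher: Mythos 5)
Your proposal is correct and is essentially identical to the paper's own proof: it cites Fact~\ref{bas} for amalgamation, joint embedding and no maximal models, Lemma~\ref{psta} for $\lambda$-stability when $\lambda^{\aleph_0}=\lambda$, and then applies Fact~\ref{universal}. The only difference is that you explicitly check the side condition $\lambda > \theta_0 = \aleph_0$, which the paper leaves implicit; that extra care is harmless and arguably an improvement.
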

\begin{proof}
By Lemma \ref{psta} $\Kp$ is $\lambda$-stable if $\lambda^{\aleph_0}= \lambda$. As $\Kp$ has amalgamation, joint embedding and no maximal models by Fact \ref{bas}, it follows by Fact \ref{universal} that there is a universal model of cardinality $\lambda$ if  $\lambda^{\aleph_0}=\lambda$ or $\forall \mu < \lambda(
\mu^{\aleph_0} < \lambda)$. \end{proof}

\begin{remark}
Recall that for an abelian group $G$, $t_p(G)= \{ g \in G : \text{ there exists an } n \in \mathbb{N} \text{ s.t. } p^n g = 0 \}$. One can show that if $G \in K^{Ab}$ is a universal group in $\Kab_\lambda$, then $t_p(G) \in K^{p\text{-grp}}$ is a universal group in $\Kp_\lambda$. In light of this observation, the above result also follows from \cite[3.19]{kuma}. The reason we decided to present the above  argument for Theorem \ref{main} is to showcase how the technology of AECs can be used to obtain universal models. Moreover, the method presented in this section gives us more information about $\Kp$ (it shows that $\Kp$ is stable) and can be generalized to other classes (see Section 4).
\end{remark}

The above theorem is the main result in the positive direction. The next result shows that if certain inequalities hold, then there are cardinals where there are no universal models.

\begin{lemma}\label{nouniv}
Let $\lambda$ be a regular cardinal and $\mu$ be an infinite cardinal. If $\mu^+ < \lambda < \mu^{\aleph_0}$, then $\Kp$ does not have a universal model of cardinality $\lambda$. 
\end{lemma}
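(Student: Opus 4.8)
The plan is to exhibit an abelian $p$-group of cardinality $\lambda$ that cannot purely embed into any abelian $p$-group of cardinality $\lambda$, using the hypothesis $\mu^+ < \lambda < \mu^{\aleph_0}$ together with a club-guessing / non-structure argument in the style of \cite{kojsh}. The natural candidates for abelian $p$-groups with rich purity-invariants are the separable $p$-groups determined by a fixed basic subgroup together with a prescribed family of elements of the torsion-completion: since $\K^{sep\text{-}p\text{-}grp}$ sits inside $\Kp$ as a subclass closed under pure subgroups, a group witnessing no universal model in the separable case will also witness it in $\Kp$. Concretely, I would let $B = \bigoplus_{n<\omega}\bigoplus_{\mu} \mathbb{Z}(p^n)$ be a fixed basic subgroup of cardinality $\mu$, note that the torsion-completion $\bar B$ has cardinality $\mu^{\aleph_0} > \lambda$, and consider subgroups $G$ with $B \leq_p G \leq_p \bar B$ and $\|G\| = \lambda$. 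The key point, going back to \cite{kojsh} and explained in \cite[\S 2.4]{balc} and \cite{dz}, is that for such groups the isomorphism type (and more relevantly, the pure-embeddability type) is governed by a subset of $\mu^{\aleph_0}$, and one can arrange $2^\lambda$ pairwise "incompatible" such $G$'s — incompatible in the sense that no single group of size $\lambda$ purely contains more than $\lambda$ of them up to the relevant equivalence.

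The key steps, in order, would be: (i) fix $B$ and $\bar B$ as above and record that every element of $\bar B$ is a "branch" through the tree of partial sums, so that subgroups of $\bar B$ containing $B$ purely correspond to sets of branches closed under the module operations; (ii) use the regularity of $\lambda$ and the inequality $\lambda < \mu^{\aleph_0}$ to build, by a tree construction of height $\lambda$ or by a direct diagonalization against a list of length $\lambda$, a family $\{G_\eta : \eta \in {}^{\lambda}2\}$ of subgroups of $\bar B$ of cardinality $\lambda$ such that for any fixed $H \in \Kp_\lambda$ the set $\{\eta : G_\eta \leq_p H\}$ has size at most $\lambda$ — here one uses that a pure embedding $G_\eta \to H$ is determined by its restriction to $B$ together with where it sends $\|G_\eta\| = \lambda$ further elements, giving at most $\|H\|^\lambda \cdot \ldots$; (iii) the counting in (ii) needs $\mu^+ < \lambda$ so that $B$ (of size $\mu$) has few enough pure embeddings into $H$, i.e. at most $\lambda^\mu = \lambda$ of them when $\lambda$ is regular and $> \mu^+$; (iv) conclude that if $U \in \Kp_\lambda$ were universal it would purely contain all $2^\lambda$ of the $G_\eta$, contradicting step (ii).

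The main obstacle I anticipate is step (ii): making the family $\{G_\eta\}$ genuinely "spread out" so that the counting argument in (iii) applies. The naive bound on the number of pure embeddings $G_\eta \to H$ is $\|H\|^{\|G_\eta\|} = \lambda^\lambda = 2^\lambda$, which is useless; one must instead exploit that a pure embedding of a subgroup of $\bar B$ is rigidified by its action on the fixed basic subgroup $B$ and on a small "spine", reducing the count to something like $\lambda^{\mu} = \lambda$ (valid since $\lambda$ is regular and $\mu^+ < \lambda$), and then argue that the $G_\eta$'s differ on "new" coordinates that the embedding cannot have anticipated. This is precisely the role of the hypothesis $\mu^+ < \lambda < \mu^{\aleph_0}$, and getting the bookkeeping right — ensuring the $\lambda$-many ``degrees of freedom'' in each $G_\eta$ cannot all be absorbed — is the delicate part; I would lift this machinery essentially verbatim from \cite{kojsh}, adapting it from the separable-$p$-group setting (which is where it was originally run) to note that the resulting obstruction already lives inside $\Kp$.
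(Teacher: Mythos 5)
Your overall strategy coincides with the paper's: the paper's proof is only a two-line sketch that cites \cite[3.3]{kojsh} (the non-existence result for separable abelian $p$-groups built between a basic subgroup $B$ of size $\mu$ and its torsion-completion $\bar B$, with $\|\bar B\|=\mu^{\aleph_0}>\lambda$) and observes that the argument there already rules out a universal target ranging over \emph{all} of $\Kp_\lambda$, not just over separable groups. You make the same two moves, and you are right to insist on running the obstruction against arbitrary $H\in\Kp_\lambda$ --- note that your opening phrasing (``a group witnessing no universal model in the separable case will also witness it in $\Kp$'') is not by itself a valid implication, since non-existence of a universal object in a subclass does not transfer to a superclass; the content is precisely that the \cite{kojsh} construction bounds embeddability into arbitrary targets.

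There is, however, a genuine gap in the mechanism you propose in steps (ii)--(iii). You want to bound the number of pure embeddings of $B$ into a fixed $H\in\Kp_\lambda$ by $\lambda^{\mu}=\lambda$, ``valid since $\lambda$ is regular and $\mu^+<\lambda$.'' This cardinal identity is in fact \emph{provably false} under the hypotheses of the lemma: from $\lambda<\mu^{\aleph_0}$ we get $\lambda<\mu^{\aleph_0}\le 2^{\mu}\le\lambda^{\mu}$, so $\lambda^{\mu}>\lambda$ always. Regularity of $\lambda$ and $\lambda>\mu^+$ give no control of $\lambda^\mu$, so the diagonalization as you describe it does not close. The argument of \cite[3.3]{kojsh} (see also \cite{dz} and \cite[\S 2.4]{balc}) does not count embeddings at all; instead it fixes a club-guessing sequence on $\lambda$ (this is where regularity and $\mu^+<\lambda$ are used), attaches to each candidate group an invariant computed along a filtration of length $\lambda$ taking values in a quotient of $\mathcal{P}(\mu^{\aleph_0})$, proves the invariant is monotone under pure embeddings, and derives a contradiction from the fact that a single group of size $\lambda$ can only realize $\lambda$-many of the more than $\lambda$ available invariants. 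If you replace your steps (ii)--(iii) by this invariant argument --- which is what your closing sentence defers to anyway --- the proof matches the paper's.
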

\begin{proof}[Proof sketch]
In \cite[3.3]{kojsh} it is shown that there is no universal model of cardinality $\lambda$ in the class of separable abelian $p$-groups for purity. What is presented there can be used to conclude the stronger result that there is no universal model of cardinality $\lambda$ in the class of abelian $p$-groups for purity.
\end{proof}

As a simple corollary we obtain: 

\begin{cor}\label{bome}
For $n\geq 2$, $\Kp$ has a universal model of cardinality $\aleph_n$ if and only if $2^{\aleph_0} \leq \aleph_n$. 
\end{cor}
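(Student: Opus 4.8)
The plan is to deduce Corollary \ref{bome} directly from Theorem \ref{main} and Lemma \ref{nouniv}, splitting into the two implications.

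For the ``if'' direction, suppose $2^{\aleph_0} \leq \aleph_n$ with $n \geq 2$. I want to verify that $\lambda = \aleph_n$ satisfies one of the two cardinal-arithmetic hypotheses of Theorem \ref{main}. If $2^{\aleph_0} = \aleph_n$, then $\aleph_n^{\aleph_0} = (2^{\aleph_0})^{\aleph_0} = 2^{\aleph_0} = \aleph_n$, so the first hypothesis $\lambda^{\aleph_0} = \lambda$ holds. If instead $2^{\aleph_0} < \aleph_n$, then for every $\mu < \aleph_n$ we have $\mu \leq \aleph_{n-1}$, hence $\mu^{\aleph_0} \leq \aleph_{n-1}^{\aleph_0}$; using $2^{\aleph_0} < \aleph_n$ together with the standard Hausdorff-type computation $\aleph_{n-1}^{\aleph_0} = \aleph_{n-1} \cdot 2^{\aleph_0}$ (provable by induction on $n$ from $\aleph_k^{\aleph_0} = \aleph_k \cdot \aleph_{k-1}^{\aleph_0}$), we get $\mu^{\aleph_0} \leq \aleph_{n-1}\cdot 2^{\aleph_0} < \aleph_n = \lambda$, so the second hypothesis $\forall \mu < \lambda(\mu^{\aleph_0} < \lambda)$ holds. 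In either case Theorem \ref{main} gives a universal model of cardinality $\aleph_n$, i.e., a universal abelian $p$-group for purity of that cardinality.

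For the ``only if'' direction, I argue the contrapositive: assume $2^{\aleph_0} > \aleph_n$ and show there is no universal model in $\Kp_{\aleph_n}$. Apply Lemma \ref{nouniv} with $\lambda = \aleph_n$ (which is regular) and $\mu = \aleph_{n-2}$ (here $n \geq 2$ is exactly what is needed so that $\aleph_{n-2}$ makes sense as an infinite cardinal). Then $\mu^+ = \aleph_{n-1} < \aleph_n = \lambda$, and $\mu^{\aleph_0} = \aleph_{n-2}^{\aleph_0} \geq 2^{\aleph_0} > \aleph_n = \lambda$, so the hypothesis $\mu^+ < \lambda < \mu^{\aleph_0}$ of Lemma \ref{nouniv} is satisfied and we conclude there is no universal model of cardinality $\aleph_n$.

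I do not anticipate a genuine obstacle here: the corollary is a bookkeeping exercise combining the two main lemmas with elementary cardinal arithmetic, and the only point requiring the slightest care is the computation $\aleph_{n-1}^{\aleph_0} = \aleph_{n-1}\cdot 2^{\aleph_0}$ used in the ``if'' direction when $2^{\aleph_0} < \aleph_n$, which is standard. One small matter of presentation: the case $n = 2$ in the ``only if'' direction forces $\mu = \aleph_0$, which is fine since $\aleph_0$ is an infinite cardinal as required by Lemma \ref{nouniv}; no separate treatment is needed.
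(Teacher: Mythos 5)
Your proposal is correct and follows essentially the same route as the paper: the ``if'' direction via the Hausdorff formula and Theorem \ref{main}, and the ``only if'' direction by the contrapositive using Lemma \ref{nouniv}. The only cosmetic differences are that the paper avoids your case split by noting $\aleph_n^{\aleph_0} = 2^{\aleph_0}\cdot\aleph_n = \aleph_n$ directly whenever $2^{\aleph_0}\leq\aleph_n$, and it takes $\mu=\aleph_0$ rather than $\mu=\aleph_{n-2}$ in the application of Lemma \ref{nouniv}.
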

\begin{proof}
\fbox{$\to$} Assume for the sake of contradiction that $2^{\aleph_0} > \aleph_n$, then $\aleph_0^+ < \aleph_n < \aleph_0^{\aleph_0}$. So we get a contradiction by Lemma \ref{nouniv}.

\fbox{$\leftarrow$} If $2^{\aleph_0} \leq \aleph_n$, then $\aleph_n^{\aleph_0}=2^{\aleph_0}\aleph_n= \aleph_n$ where the first equality follows from Hausdorff formula. Therefore, by Theorem \ref{main}, there is a universal model of cardinality $\aleph_n$. 
\end{proof}

The above corollary gives a complete solution to the Main Problem below $\aleph_\omega$ except for the cases of $\aleph_0$ and $\aleph_1$. The next lemma addresses the case of $\aleph_1$. 

\begin{lemma}\label{one}\
\begin{enumerate}
\item If $2^{\aleph_0} = \aleph_1$, then $\Kp$ has a universal model of cardinality $\aleph_1$.
\item If $2^{\aleph_0} > \aleph_1$ and the combinatorial principle $\clubsuit$ holds\footnote{$\clubsuit$ is a combinatorial principle similar to $\lozenge$, but weaker. For the definition and what is known about $\clubsuit$, the reader can consult \cite[\S I.7]{fsh}. }, then $\Kp$ does not have a universal model of cardinality $\aleph_1$.
\end{enumerate}
\end{lemma}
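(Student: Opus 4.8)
The plan is to handle the two clauses separately, using general AEC machinery for (1) and a transfer from the known separable case for (2). For clause (1), under $2^{\aleph_0}=\aleph_1$ we have $\aleph_1^{\aleph_0}=2^{\aleph_0}=\aleph_1$, so Lemma \ref{psta} gives that $\Kp$ is $\aleph_1$-stable. Now I cannot quote Fact \ref{universal} directly, since that result produces a universal model only in cardinals $\lambda>\theta_0\ge\LS(\K)$ and here $\lambda=\aleph_1$ would require $\theta_0=\aleph_0$, which is exactly the boundary case the fact excludes. Instead I would argue directly: since $\Kp$ has amalgamation, joint embedding, no maximal models (Fact \ref{bas}), and $\LS(\Kp)=\aleph_0$, one builds a $\leq_p$-increasing continuous chain $(G_i : i<\omega_1)$ of abelian $p$-groups of size $\aleph_1$ (or $\le\aleph_1$) that is \emph{universal at each step}: given $G_i$, enumerate the at most $\aleph_1$ Galois-types over $G_i$ using $\aleph_1$-stability, and use amalgamation to realize all of them in a single extension $G_{i+1}$ of size $\aleph_1$ (amalgamating $\aleph_1$-many one-point extensions over $G_i$, which stays of size $\aleph_1$ since $\aleph_1\cdot\aleph_1=\aleph_1$). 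Taking $U=\bigcup_{i<\omega_1}G_i$, any $N\in\Kp_{\aleph_1}$ embeds into $U$ by a back-and-forth / chain argument: write $N$ as a union of a continuous chain $(N_i:i<\omega_1)$ with $\|N_i\|\le\aleph_0\cdot|i|<\aleph_1$, and build an increasing chain of $\K$-embeddings $f_i\colon N_i\to G_{j(i)}$ step by step, using amalgamation and the universality-at-each-step of the $G_i$'s to extend; the union of the $f_i$ is the desired embedding. This is the standard construction of universal models from stability plus amalgamation, specialized to $\lambda=\LS(\K)^+$.

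For clause (2), I would transfer the non-existence result from the class of separable abelian $p$-groups. Recall that a $p$-group $S$ is separable if its first Ulm subgroup $p^\omega S=\bigcap_{n}p^n S$ is trivial; the class of separable abelian $p$-groups with pure embeddings is a sub-AEC of $\Kp$. The key observation is that \emph{separable abelian $p$-groups of size $\aleph_1$ purely embed into $U$ iff they purely embed into some separable abelian $p$-group of size $\aleph_1$ sitting purely inside $U$} — more usefully, if $U\in\Kp_{\aleph_1}$ were universal for purity, then by taking the closure under the operation ``divide by $p^\omega$'' appropriately one would obtain a universal object for the separable class of size $\aleph_1$, contradicting the $\clubsuit$-based non-existence theorem of Kojman–Shelah. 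Concretely: given a purely embedded separable $S\le_p U$, purity is inherited, and a universal $U$ for all $p$-groups of size $\aleph_1$ in particular contains a pure copy of every separable one of size $\aleph_1$; the subtlety is that a pure subgroup of $U$ generated by such a copy need not itself be separable, so one must instead argue that the \emph{existence} of $U$ contradicts the construction in \cite[\S 1]{fsh} / \cite[3.2]{kojsh}, which builds, from $\clubsuit$ and $2^{\aleph_0}>\aleph_1$, a family of $2^{\aleph_1}$ pairwise ``non-embeddable'' separable $p$-groups of size $\aleph_1$ which are also non-embeddable into any single $p$-group of size $\aleph_1$. I would cite this family and note that no $U\in\Kp_{\aleph_1}$ can receive pure embeddings from all members of it, exactly as in the proof sketch of Lemma \ref{nouniv}.

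The main obstacle is clause (2): making precise that the Kojman–Shelah non-structure construction, which is stated for the separable class, actually obstructs universality in the larger class $\Kp$. The point to get right is that the invariants used to distinguish the $2^{\aleph_1}$ groups (built from a $\clubsuit$-sequence) are preserved under pure embedding into \emph{arbitrary} $p$-groups of size $\aleph_1$, not merely separable ones — i.e., that the relevant ladder-system / stationary-set invariant of a separable $p$-group is an obstruction to purely embedding it into any $p$-group of the same cardinality. This is implicit in \cite{kojsh} and \cite[\S I.7]{fsh}, but I would state it as the crux of the argument and give the short reduction rather than reproving the combinatorics. Clause (1), by contrast, is routine: it is the textbook ``stable $+$ amalgamation $\Rightarrow$ universal model in $\LS(\K)^+$'' argument, and the only thing to check is that all the model-building stays within cardinality $\aleph_1$, which follows from $\aleph_1^{\aleph_0}=\aleph_1$ under $2^{\aleph_0}=\aleph_1$.
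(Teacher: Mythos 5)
Your proposal is correct and, for clause (2), follows essentially the same route as the paper: the paper notes that $\clubsuit$ yields a club-guessing sequence in the sense of \cite[1.5]{kojsh} and then invokes \cite[3.3]{kojsh}, observing (as in the proof sketch of Lemma \ref{nouniv}) that the Kojman--Shelah obstruction for separable $p$-groups in fact obstructs pure embeddings into arbitrary $p$-groups of size $\aleph_1$; you correctly isolate this last point as the crux. The only real divergence is in clause (1), and it stems from a misreading of Fact \ref{universal}: that fact requires $\lambda > \theta_0$, and with $\theta_0 = \LS(\Kp) = \aleph_0$ the case $\lambda = \aleph_1$ is \emph{not} excluded, since $\aleph_1 > \aleph_0$. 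Consequently the paper proves (1) in one line, by observing that $2^{\aleph_0} = \aleph_1$ gives $\aleph_1^{\aleph_0} = \aleph_1$ and citing Theorem \ref{main}. Your hand-built chain $(G_i : i < \omega_1)$ of models each realizing all Galois-types over its predecessor, together with the resolution-and-bookkeeping argument to embed an arbitrary $N \in \Kp_{\aleph_1}$, is a correct (if standard) reproof of the relevant instance of Fact \ref{universal}, so nothing is wrong mathematically --- it is simply unnecessary work; if you keep it, be explicit that extending an embedding across a countable step $N_i \lea N_{i+1}$ is done one element at a time (using amalgamation to extend each finite-tuple type to a type over the ambient $G_j$), interleaved over $\omega$ substeps, since type realization is only guaranteed for finite tuples.
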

\begin{proof}
(1) follows directly from Theorem \ref{main}, so we focus on (2). It is easy to show that $\clubsuit$ implies the existence of a club guessing sequence in $\aleph_1$ in the sense of \cite[1.5]{kojsh}. Then the result follows from \cite[3.3]{kojsh} by a similar argument to the one given in Lemma \ref{nouniv}. 
\end{proof}

So for cardinals below $\aleph_\omega$ we are only left with $\aleph_0$.  So we ask:

\begin{question}\label{quest}
Is there a universal model in $\Kp$ of cardinality $\aleph_0$?
\end{question}

\begin{remark}
In the case of $\aleph_0$, we think that the question can be answered using group theoretic methods. For instance, using Exercise 6 of \cite[\S 11.1]{fuc} it is possible to build an abelian $p$-group $G$ of cardinality $\aleph_1$ such that every countable abelian $p$-group is purely embeddable into $G$. 
\end{remark}

\begin{remark}
For cardinals greater than or equal to $\aleph_\omega$, Theorem \ref{main} gives many instances for existence of universal models in $\Kp$. For example, Theorem \ref{main} implies the existence of a universal model in $2^{\aleph_1}$ or $\beth_\omega$. On the other hand, Lemma \ref{nouniv} gives instances where there are no universal models if GCH fails. There are still some cardinals that are not covered by any of these cases, but based on what is known about reduced torsion-free groups and separable torsion groups (see \cite[\S 10.(B)]{sh1151}, in particular Table 1 at the end of that paper), we think that the answer in those cases depends even more on set theoretic hypotheses and not on the theory of AECs. 
\end{remark}

\section{Some generalizations}

The key ideas we used to understand $\Kp$ were that $\Kp$ fits nicely inside $\Kab$ and that the class $\Kab$ is well-understood and well-behaved. As we think that these ideas might have further applications, we abstract this set up in the next definition.

\begin{defin}
Let $\K= (K, \lea)$ and $\K^{\star}=(K^{\star}, \lea)$ be a pair of AECs. We say that $\K^{\star}$ is nicely generated inside $\K$ if:
\begin{enumerate}
\item $K^{\star} \subseteq K$.
\item For any $N_1, N_2 \in K^{\star}$ and $N \in K$, if $N_1, N_2 \lea N$, then there is $L \in K^{\star}$ such that $N_1, N_2 \lea L \subseteq N$.
\end{enumerate}
\end{defin}

\begin{example}\
\begin{enumerate}
\item  $\Kp$ is nicely generated inside $\Kab$.  Given $N_1, N_2 \in \Kkp$ and $N \in Ab$ such that $N_1, N_2 \leq_p N$, then $L= N_1+N_2 \in \Kkp$ and it satisfies that $N_1, N_2 \leq_p L \subseteq N$. 
\item $\K^\text{Tor}$ is nicely generated inside $\Kab$.
\item  $\Kp_\leq=(\Kkp, \leq)$ and $\K^\text{Tor}_\leq=(K^\text{Tor}, \leq)$ are nicely generated inside $\K^{Ab}_\leq$.
\item $\K^{R\text{-Tor}}=(K^{R\text{-Tor}}, \leq_p)$ is nicely generated inside $(R\text{-Mod}, \leq_{p})$ where $R$ is an integral domain and $K^{R\text{-Tor}}$ is the class of $R$-torsion modules. An $R$-module $M$ is an $R$-torsion module if for every $m \in M$ there is $r \neq 0$ such that $rm= 0$.
\item $\K^{R\text{-Div}} = (K^{R\text{-Div}}, \leq_p)$ is nicely generated inside $(R\text{-Mod}, \leq_{p})$ where $R$ is an integral domain and $K^{R\text{-Div}}$ is the class of $R$-divisible modules. An $R$-module $M$ is an $R$-divisible module if for every $m \in M$ and $r \neq 0$, there is an $n \in M$ such that $rn= m$.
\item $\K^\text{Div}_\leq = (K^\text{Div}, \leq)$ is nicely generated inside $\K^{Ab}_\leq$, where $K^\text{Div}$ is the class of divisible abelian groups.

\end{enumerate}
\end{example}

The next lemma is the key observation.

\begin{lemma}\label{com-square}
Assume $\K^{\star}=(K^{\star}, \lea)$ is nicely generated inside $\K= (K, \lea)$. If $M\lea N_1, N_2 \in K^{\star}$ and there are $N' \in K$ and $\K$-embeddings $f_1: N_1 \to N'$ and $f_2: N_2 \to N'$ such that $f_1\rest_M=f_2\rest_M$, then there are $L \in K^\star$ and $\K^\star$-embeddings $g_1: N_1 \to L$ and $g_2: N_2 \to L$ such that $L \subseteq N'$ and $i \circ g_\ell = f_\ell$ for $\ell \in \{ 1, 2 \}$ where $i: L \to N'$ is the inclusion. 
\end{lemma}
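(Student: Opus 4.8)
The plan is to unwind the definitions and apply the ``nicely generated'' hypothesis directly. By assumption we have $N' \in K$ together with $\K$-embeddings $f_1 \colon N_1 \to N'$ and $f_2 \colon N_2 \to N'$ agreeing on $M$. Since $\K$-embeddings are in particular isomorphisms onto their images with $\lea$-image, the sets $f_1[N_1]$ and $f_2[N_2]$ are submodules of $N'$ with $f_1[N_1], f_2[N_2] \lea N'$, and both belong to $K^{\star}$ (as isomorphic copies of $N_1, N_2$, using closure of $K^\star$ under isomorphism, clause (3) of Definition~\ref{aec-def}). So condition (2) in the definition of ``nicely generated inside $\K$'' applies with the pair $f_1[N_1], f_2[N_2] \lea N'$, producing $L \in K^{\star}$ with $f_1[N_1], f_2[N_2] \lea L \subseteq N'$.

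Next I would define $g_\ell \colon N_\ell \to L$ to be the corestriction of $f_\ell$, i.e.\ $g_\ell$ is $f_\ell$ regarded as a map into $L$; this makes sense precisely because $f_\ell[N_\ell] \subseteq L$. Writing $i \colon L \to N'$ for the inclusion, we get $i \circ g_\ell = f_\ell$ on the nose, and $g_1 \rest_M = g_2 \rest_M$ since $f_1 \rest_M = f_2 \rest_M$. It remains to check that each $g_\ell$ is a genuine $\K^{\star}$-embedding, i.e.\ that $g_\ell \colon N_\ell \cong g_\ell[N_\ell]$ (clear, since $f_\ell$ already is an isomorphism onto its image and $g_\ell$ has the same image) and that $g_\ell[N_\ell] = f_\ell[N_\ell] \lea L$ in $\K^{\star}$. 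This last point is the only spot requiring a little care: we know $f_\ell[N_\ell] \lea N'$ and we have arranged $f_\ell[N_\ell] \subseteq L \subseteq N'$ with $L \lea N'$, all inside $\K^{\star} \subseteq K$; so the Coherence axiom (clause (4) of Definition~\ref{aec-def}) applied in $\K$ gives $f_\ell[N_\ell] \lea L$. Since $\lea$ is the same ordering on $K^{\star}$ as on $K$, this is exactly $g_\ell[N_\ell] \leap{\K^{\star}} L$, so $g_\ell$ is a $\K^{\star}$-embedding.

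I expect no serious obstacle here; the statement is essentially a bookkeeping exercise extracting the content of the definition, and the only conceptual ingredient is the use of the Coherence axiom to upgrade $f_\ell[N_\ell] \subseteq L$ to $f_\ell[N_\ell] \lea L$. One should be slightly careful to phrase things so that the composition identity $i \circ g_\ell = f_\ell$ is literally true rather than true up to the identification of $N'$ with an isomorphic copy; taking $g_\ell$ to be the literal corestriction of $f_\ell$ and $i$ the literal inclusion handles this cleanly. The commutativity $g_1 \rest_M = g_2 \rest_M$ then follows immediately from the corresponding property of $f_1, f_2$, completing the proof.
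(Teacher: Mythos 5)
Your proof is correct and is essentially the intended argument: the paper states Lemma~\ref{com-square} without proof, and your argument is the direct abstraction of the concrete computation in the proof of Theorem~\ref{ptypes}, where $L = f_1[G_1]+f_2[G_2]$ plays the role of the $L$ produced by clause (2) of the definition of nicely generated. One small remark: your closing appeal to Coherence is redundant and, as written, invokes $L \lea N'$, which the definition does not supply (it only gives $L \subseteq N'$); but this is harmless, since the needed relation $f_\ell[N_\ell] \lea L$ is already part of the conclusion of clause (2), exactly as you record in your first paragraph, so that step can simply be deleted.
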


\begin{cor} Assume $\K^{\star}=(K^{\star}, \lea)$ is nicely generated inside $\K= (K, \lea)$ .  If $\K$ has the amalgamation property, then $\K^{\star}$ has the amalgamation property.
\end{cor}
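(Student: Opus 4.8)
The corollary follows almost immediately from Lemma \ref{com-square}, so the plan is mostly an unwinding of definitions.

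\medskip

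\textbf{Plan.} Let $M, N_1, N_2 \in K^\star$ with $M \lea N_1$ and $M \lea N_2$; I must produce an amalgam inside $K^\star$. First I would use the inclusion $K^\star \subseteq K$: since $M, N_1, N_2 \in K$ and $\lea$ is the same ordering on $K^\star$ as on $K$, we have $M \lea N_1, N_2$ in $\K$ as well. Now apply the amalgamation property of $\K$: there are $N' \in K$ and $\K$-embeddings $f_1 \colon N_1 \to N'$, $f_2 \colon N_2 \to N'$ with $f_1 \rest_M = f_2 \rest_M$. This is exactly the hypothesis of Lemma \ref{com-square}, so there are $L \in K^\star$ and $\K^\star$-embeddings $g_1 \colon N_1 \to L$, $g_2 \colon N_2 \to L$ with $L \subseteq N'$ and $i \circ g_\ell = f_\ell$ for $\ell \in \{1,2\}$, where $i \colon L \to N'$ is the inclusion.

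\medskip

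\textbf{Finishing.} It remains to check that $g_1, g_2$ agree on $M$. From $i \circ g_1 = f_1$ and $i \circ g_2 = f_2$ and $f_1\rest_M = f_2\rest_M$, we get $i \circ (g_1\rest_M) = i \circ (g_2\rest_M)$, and since $i$ (being an inclusion) is injective, $g_1\rest_M = g_2\rest_M$. Thus $L \in K^\star$ together with the $\K^\star$-embeddings $g_1 \colon N_1 \to L$ and $g_2 \colon N_2 \to L$ witnesses amalgamation for the triple $(M, N_1, N_2)$ in $\K^\star$. Since $M, N_1, N_2$ were arbitrary, $\K^\star$ has the amalgamation property.

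\medskip

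\textbf{Main obstacle.} There is essentially no obstacle at the level of the corollary itself — all the content has been packaged into Lemma \ref{com-square}. The only point requiring a moment's care is the bookkeeping with the inclusion $i$: one must remember that $g_\ell$ is a $\K^\star$-embedding into $L$ (not into $N'$), that $L \subseteq N'$ genuinely holds so that $i$ makes sense, and that injectivity of $i$ is what transports the agreement $f_1\rest_M = f_2\rest_M$ down to $g_1\rest_M = g_2\rest_M$. (If one wished to prove Lemma \ref{com-square} itself, the real work would be invoking condition (2) of ``nicely generated'' to find $L \in K^\star$ with $f_1[N_1], f_2[N_2] \lea L \subseteq N'$, and then using coherence — axiom (4) of Definition \ref{aec-def} — to upgrade the maps $N_\ell \to f_\ell[N_\ell] \subseteq L$ to $\K^\star$-embeddings; but that is outside the scope of this corollary.)
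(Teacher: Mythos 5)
Your proposal is correct and is exactly the argument the paper intends: the corollary is stated without proof immediately after Lemma \ref{com-square} precisely because it follows by amalgamating in $\K$ and then applying the lemma, with the agreement $g_1\rest_M = g_2\rest_M$ recovered from $i\circ g_\ell = f_\ell$ as you note. No discrepancy with the paper's approach.
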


The next two result are proven by generalizing the proofs of Theorem \ref{ptypes} and Theorem \ref{psta} respectively. 

\begin{cor}\label{types}
Assume $\K^{\star}=(K^{\star}, \lea)$ is nicely generated inside $\K= (K, \lea)$. Let $N_1, N_2 \in K^\star$, $A \subseteq N_1, N_2$, $\bar{b}_1 \in N_1^{< \omega}$ and $ \bar{b}_2 \in N_2^{< \omega}$, then:
\[ \gtp_{\K}(\bar{b}_1/A; N_1) = \gtp_{\K}(\bar{b}_2/A; N_2) \text{ if and only if } \gtp_{\K^\star}(\bar{b}_1/A; N_1) = \gtp_{\K^\star}(\bar{b}_2/A; N_2)\]
\end{cor}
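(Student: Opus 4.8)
The plan is to mimic the proof of Theorem \ref{ptypes} essentially verbatim, replacing the group-theoretic closure ``$N_1 + N_2$ is an abelian $p$-group'' by the abstract hypothesis that $\K^\star$ is nicely generated inside $\K$, and using Lemma \ref{com-square} to manufacture the witnessing square in $\K^\star$. The backward direction is immediate, so everything is in the forward direction.

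\medskip

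Here is how I would carry it out. Assume $N_1, N_2 \in K^\star$, $A \subseteq N_1, N_2$, and $\gtp_{\K}(\bar b_1/A; N_1) = \gtp_{\K}(\bar b_2/A; N_2)$. Strictly speaking $A$ is just a subset, not a model, so the first thing to do is to reduce to a situation where Lemma \ref{com-square} applies: pick (using $\LS(\K^\star)$ and the Tarski--Vaught axioms) an $M \in K^\star$ with $A \subseteq |M|$ and $M \leap{\K^\star} N_1$ and $M \leap{\K^\star} N_2$ — actually, since $A$ need not sit inside a common small submodel of both, the cleaner route is to unwind the definition of Galois-type first. By the definition of $\gtp_\K$ and since $E^\K_{\mathrm{at}}$ has transitive closure $E^\K$, there is a finite chain of $E^\K_{\mathrm{at}}$-steps connecting $(\bar b_1, A, N_1)$ to $(\bar b_2, A, N_2)$; each step provides some $N' \in K$ and $\K$-embeddings into $N'$ agreeing on $A$ and sending the relevant tuples to a common value. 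For a single such step, say witnessed by $N' \in K$ and $f_\ell : N_\ell \to_A N'$ (for the two consecutive triples $(\bar b, A, P)$ and $(\bar b', A, Q)$ with $P, Q \in K^\star$), I apply Lemma \ref{com-square} with $M$ taken to be a common $\K^\star$-submodel of $P$ and $Q$ containing $A$ — or, if $A$ is not inside such a model, I first replace the single step by noting that $f_\ell \rest_A$ agreeing means $f_1[P] \cap f_2[Q] \supseteq f_1[A]$, and I use nice generation directly: let $L = $ the $K^\star$-model with $f_1[P], f_2[Q] \leap{\K} L \subseteq N'$ given by condition (2) of nice generation. Then $f_1 : P \to L$ and $f_2 : Q \to L$ are $\K^\star$-embeddings (by coherence, since $f_\ell[N_\ell] \leap{\K} N'$, $f_\ell[N_\ell] \subseteq L$, and $L \leap{\K} N'$ would need $L \leap{\K} N'$ — this is exactly where I must be careful, see below), they still agree on $A$, and they still send $\bar b, \bar b'$ to the same value. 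Hence $(\bar b, A, P) \, E^{\K^\star}_{\mathrm{at}} \, (\bar b', A, Q)$. Chaining these $E^{\K^\star}_{\mathrm{at}}$-steps along the original $\K$-chain yields $(\bar b_1, A, N_1) \, E^{\K^\star} \, (\bar b_2, A, N_2)$, i.e. $\gtp_{\K^\star}(\bar b_1/A; N_1) = \gtp_{\K^\star}(\bar b_2/A; N_2)$.

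\medskip

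The main obstacle is the coherence bookkeeping: in the original proof for $\Kp$ one gets $f_\ell[G_\ell] \leq_p L$ ``for free'' because purity is absolute (a submodule purely embedded in $N'$ and contained in $L$ is purely embedded in $L$), but in the abstract setting I need $f_\ell[N_\ell] \leap{\K^\star} L$ and this requires knowing $L \leap{\K} N'$, not merely $L \subseteq N'$. So the correct statement of Lemma \ref{com-square}'s output must already give $L \leap{\K} N'$ (equivalently the $g_\ell$ are genuine $\K^\star$-embeddings into $L$); granting that — which is exactly what Lemma \ref{com-square} asserts — coherence (axiom (4) of Definition \ref{aec-def}) applied to $f_\ell[N_\ell] \leap{\K} N'$, $L \leap{\K} N'$, $f_\ell[N_\ell] \subseteq L$ delivers $f_\ell[N_\ell] \leap{\K} L$, and since $\leap{\K}$ restricted to $K^\star$ is by definition $\leap{\K^\star}$, the embeddings land in $\K^\star$ as required. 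I would therefore present the proof as: reduce to one $E_{\mathrm{at}}$-step, invoke Lemma \ref{com-square} to get the commuting square inside $K^\star$ below $N'$, observe the tuples and $A$-restrictions are preserved, conclude one $E^{\K^\star}_{\mathrm{at}}$-step, and finally iterate along the (finite) transitive-closure chain. The only genuinely delicate point — handling the fact that $A$ is a set rather than a model when setting up Lemma \ref{com-square} — is dispatched by working step-by-step with the consecutive $K^\star$-models supplied by the $E^\K$-chain and applying nice generation to their images directly, never needing $A$ to be inside a small common submodel.
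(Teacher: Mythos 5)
Your overall strategy is the paper's intended one: the paper gives no explicit proof of Corollary \ref{types}, saying only that it is obtained by generalizing Theorem \ref{ptypes}, and the core of your argument --- replace the witnessing $\K$-amalgam $N'$ by the $L \in K^\star$ with $f_1[N_1], f_2[N_2] \lea L \subseteq N'$ supplied by nice generation (i.e.\ by Lemma \ref{com-square}), and observe that the restrictions to $A$ and the images of the tuples are unchanged --- is exactly right. One remark before the main point: your coherence worry is moot. Condition (2) of nice generation (equivalently, the conclusion of Lemma \ref{com-square}) hands you $f_\ell[N_\ell] \lea L$ directly, so you never need $L \lea N'$ nor the coherence axiom; the analogous step in Theorem \ref{ptypes} does use the absoluteness of purity, but the abstract definition was set up precisely so that this conclusion is built in.

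The one genuine overreach is in the chaining step. When you unwind $E^{\K}$ as a finite chain of $E^{\K}_{\text{at}}$-steps, you assert that each consecutive pair of triples has ambient models $P, Q \in K^\star$. Only the two endpoints $(\bar{b}_1, A, N_1)$ and $(\bar{b}_2, A, N_2)$ are guaranteed to lie over $K^\star$-models; the intermediate triples live in $\K^3$, i.e.\ over arbitrary models of $K$, and for those nice generation gives you nothing, so the step-by-step conversion into $E^{\K^\star}_{\text{at}}$-steps does not go through as written. This is harmless in every application in the paper, and in the proof of Theorem \ref{ptypes} itself, because there the ambient class has amalgamation, so $E^{\K}_{\text{at}}$ is already transitive and a single atomic step between the two given $K^\star$-models suffices --- which is precisely the case your argument handles correctly. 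But Corollary \ref{types} as stated does not assume $\K$ has amalgamation, so you should either add that hypothesis (or transitivity of $E^{\K}_{\text{at}}$) explicitly, or find an argument that pushes the intermediate models of the chain into $K^\star$; the paper's terse ``generalize Theorem \ref{ptypes}'' silently makes the same single-step reduction.
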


\begin{theorem}
Assume $\K^{\star}=(K^{\star}, \lea)$ is nicely generated inside $\K= (K, \lea)$. If $\K$ is $\lambda$-stable in $\lambda \geq \LS(\K^\star)$, then $\K^\star$ is $\lambda$-stable. 
\end{theorem}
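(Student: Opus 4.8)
The plan is to mimic the proof of Lemma \ref{psta} verbatim at the level of ideas, replacing the appeal to Theorem \ref{ptypes} by Corollary \ref{types} and the appeal to the stability of $\Kab$ (Fact \ref{exstab}.(2)) by the hypothesized $\lambda$-stability of $\K$. So the argument will be: fix an arbitrary $M \in \K^{\star}_\lambda$ and exhibit an injection from $\gS_{\K^{\star}}(M)$ into $\gS_{\K}(M)$; the latter set has size $\leq \lambda$, hence so does the former.

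Concretely, first I would note that since $K^{\star} \subseteq K$ and $\|M\| = \lambda$, we have $M \in \K_\lambda$, so the $\lambda$-stability of $\K$ gives $|\gS_{\K}(M)| \leq \lambda$. Next I would define $\Phi \colon \gS_{\K^{\star}}(M) \to \gS_{\K}(M)$ on representatives by $\Phi\big(\gtp_{\K^{\star}}(b/M; N)\big) := \gtp_{\K}(b/M; N)$, where $M \leq_{\K^{\star}} N \in K^{\star}$ and $b \in N$. This makes sense because $M \lea N$ and $M, N \in K^{\star} \subseteq K$, so $\gtp_{\K}(b/M;N)$ is a legitimate element of $\gS_{\K}(M)$. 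That $\Phi$ is well defined and injective is exactly Corollary \ref{types} applied with $A = M$ and with $\bar b_1 = b_1$, $\bar b_2 = b_2$ taken to be one-element tuples: for $M \leq_{\K^{\star}} N_1, N_2$ in $K^{\star}$ and $b_\ell \in N_\ell$ one has $\gtp_{\K}(b_1/M; N_1) = \gtp_{\K}(b_2/M; N_2)$ iff $\gtp_{\K^{\star}}(b_1/M; N_1) = \gtp_{\K^{\star}}(b_2/M; N_2)$; here one uses that $A = M \subseteq N_1, N_2$, which holds because $\lea$ extends the substructure relation. Combining, $|\gS_{\K^{\star}}(M)| \leq |\gS_{\K}(M)| \leq \lambda$, which is what $\lambda$-stability of $\K^{\star}$ asserts.

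I do not anticipate a genuine obstacle; the proof is a routine transport along Corollary \ref{types}. The only points deserving a moment's care are (i) checking that a Galois type of a single element over $M$ is a bona fide instance of Corollary \ref{types}, i.e.\ a tuple of length one, so that the corollary applies as stated; and (ii) observing that the hypothesis $\lambda \geq \LS(\K^{\star})$ is not actually used in the argument — it is present only to ensure the statement is not vacuous — whereas the fact that is actually used is simply the implication $M \in \K^{\star}_\lambda \Rightarrow M \in \K_\lambda$, which then licenses invoking the definition of $\lambda$-stability of $\K$.
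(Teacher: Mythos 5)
Your proposal is correct and is exactly the paper's intended argument: the paper states that this theorem ``is proven by generalizing the proof of Theorem \ref{psta}'', which is precisely what you do, transporting Galois types along the injection given by Corollary \ref{types} and invoking the $\lambda$-stability of $\K$ in place of Fact \ref{exstab}.(2). Your side remarks (the single-element tuple instance of Corollary \ref{types}, and the role of $\lambda \geq \LS(\K^\star)$ as a non-vacuity condition) are also accurate.
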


\begin{remark} Another consequence of Corollary \ref{types} is that if $\K^{\star}=(K^{\star}, \lea)$ is nicely generated inside $\K= (K, \lea)$ then: if $\K$ is $(< \aleph_0)$-tame then $\K^\star$ is $(< \aleph_0)$-tame. Therefore, we get that $\Kp$ is $(< \aleph_0)$-tame as $\Kab$ is $(< \aleph_0)$-tame by \cite[3.15]{kuma}. Recall that an AEC is $(< \aleph_0)$-tame if its Galois-types are determined by their restrictions to finite sets (the reader can consult the definition in \cite[1.6]{maz20}).
\end{remark}

Finally, these results can be used to obtain stability cardinals and universal models.

\begin{cor}\label{last} Let $p$ be a prime number and $R$ be an integral domain.
\begin{enumerate}
\item If $\lambda^{\aleph_0}=\lambda$, then $\Kp$ and $\K^{Tor}$ are $\lambda$-stable. If $\lambda^{\aleph_0}=\lambda$ or $\forall \mu < \lambda(\mu^{\aleph_0} < \lambda)$, then $\Kp$ and $\K^{Tor}$ have a universal model of cardinality $\lambda$.
\item If $\lambda^{|R| + \aleph_0}=\lambda$, then $\K^{R\text{-Tor}}$ and $\K^{R\text{-Div}}$ are $\lambda$-stable. If $\lambda^{|R| + \aleph_0}=\lambda$ or $\forall \mu < \lambda(\mu^{|R| + \aleph_0} < \lambda)$, then $\K^{R\text{-Tor}}$ and $\K^{R\text{-Div}}$ have a universal model of cardinality $\lambda$.
\item $\Kp_\leq$, $\K^\text{Tor}_\leq$ and $\K^\text{Div}_\leq$ are $\lambda$-stable and have a universal model of cardinality $\lambda$ for every $\lambda \geq \aleph_0$. 
\end{enumerate}
\end{cor}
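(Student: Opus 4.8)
The plan is to reduce everything to the "nicely generated" machinery established in Section 4 (Lemma \ref{com-square}, its amalgamation corollary, Corollary \ref{types}, and the stability transfer theorem) together with Fact \ref{universal}. The outline is the same for all three parts: verify that the smaller class is an AEC nicely generated inside an ambient class already known to be stable (Fact \ref{exstab}), read off stability from the transfer theorem, and then feed amalgamation + joint embedding + no maximal models + the stability spectrum into Fact \ref{universal} to get universal models. So first I would fix, for each item, the ambient pair: for (1) it is $\Kab = (Ab, \leq_p)$; for (2) it is $(R\text{-Mod}, \leq_p)$ with $R$ an integral domain; for (3) it is $\K^{Ab}_\leq = (Ab, \leq)$. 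In each case the relevant examples in the list following Lemma \ref{com-square} already assert that $\Kp$, $\K^{Tor}$ (resp. $\K^{R\text{-Tor}}$, $\K^{R\text{-Div}}$, resp. $\Kp_\leq$, $\K^{Tor}_\leq$, $\K^{Div}_\leq$) are nicely generated inside the stated ambient class; I would just cite those.

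Next I would assemble the structural hypotheses of Fact \ref{universal} for each small class. Amalgamation follows from amalgamation of the ambient class via the corollary to Lemma \ref{com-square} (the ambient classes all have amalgamation: $\Kab$ by Fact \ref{bas}/Example \ref{ex2}, $(R\text{-Mod}, \leq_p)$ and $(Ab,\leq)$ are standard). Joint embedding and no maximal models hold because each small class is closed under direct sums — for (1), subgroups/submodules of a direct sum of $p$-groups (resp. torsion groups) are again $p$-groups (resp. torsion), and for the torsion and divisible $R$-module classes and the $\leq$-versions the same closure is immediate; $\LS$ is $\aleph_0$ for the $\mathbb Z$-cases and $|R| + \aleph_0$ for the $R$-module cases, inherited from the ambient class. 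Then the stability transfer theorem (the last Theorem in Section 4) applied with the stability spectra from Fact \ref{exstab} gives: $\Kp, \K^{Tor}$ are $\lambda$-stable whenever $\lambda^{\aleph_0} = \lambda$ (using Fact \ref{exstab}(2)); $\K^{R\text{-Tor}}, \K^{R\text{-Div}}$ are $\lambda$-stable whenever $\lambda^{|R|+\aleph_0} = \lambda$ (using Fact \ref{exstab}(5)); and $\Kp_\leq, \K^{Tor}_\leq, \K^{Div}_\leq$ are $\lambda$-stable for all $\lambda \geq \aleph_0$ (using Fact \ref{exstab}(1)). I should double-check that $\LS(\K^\star) \leq \lambda$ in each instance so that the hypothesis "$\lambda \geq \LS(\K^\star)$" of the transfer theorem is met; this is automatic once $\lambda^{\aleph_0}=\lambda$ (resp. $\lambda^{|R|+\aleph_0}=\lambda$, resp. $\lambda \geq \aleph_0$).

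Finally I would invoke Fact \ref{universal} with $\kappa = \aleph_0$ and $\theta_0 = \aleph_0$ for parts (1) and (3), and $\kappa = |R| + \aleph_0$, $\theta_0 = |R| + \aleph_0$ for part (2): the hypothesis "for all $\theta \geq \theta_0$, if $\theta^\kappa = \theta$ then $\K^\star$ is $\theta$-stable" is exactly what the previous paragraph established, so for $\lambda > \theta_0$ with $\lambda^\kappa = \lambda$ or $\forall \mu < \lambda\,(\mu^\kappa < \lambda)$ we get a universal model of cardinality $\lambda$. For part (3), since stability holds in every $\lambda$, the no-universal obstruction disappears and one gets universal models in all $\lambda \geq \aleph_0$ (the small cardinals $\lambda = \aleph_0, \aleph_1$ are handled directly because $\lambda^{\aleph_0} = \lambda$ is not needed — stability is outright — so Fact \ref{universal} applies with any $\theta_0 < \lambda$, or one notes these classes have a model-completion-like behaviour; in fact for $\lambda = \aleph_0$ one can exhibit the universal model by hand). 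The step I expect to be the only real point requiring care is the bookkeeping in part (3) for $\lambda \in \{\aleph_0, \aleph_1\}$, where Fact \ref{universal} as stated wants $\lambda > \theta_0 \geq \LS(\K^\star) = \aleph_0$; I would either quote the appropriate small-cardinal version of the universal-model construction or observe directly that a countable divisible group ($\mathbb{Q}^{(\omega)} \oplus \bigoplus_p \mathbb{Z}(p^\infty)^{(\omega)}$, and analogously $\bigoplus_n \mathbb{Z}(p^n)^{(\omega)} \oplus \mathbb{Z}(p^\infty)^{(\omega)}$ for $\Kp_\leq$) is universal, every other point being routine.
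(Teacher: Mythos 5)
Your proposal matches the paper's intended argument: the paper gives no explicit proof of this corollary, merely noting that it follows from the nicely-generated machinery of Section 4 (the listed examples, the amalgamation corollary, Corollary \ref{types} and the stability transfer theorem) combined with Fact \ref{exstab} and Fact \ref{universal}, which is precisely the reduction you carry out. The only delicate point, which you correctly flag, is the small-cardinal bookkeeping in part (3); the paper's own remedy is the explicit algebraic universal models recorded in the remark immediately following the corollary.
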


\begin{remark}
Universal models for torsion groups with embeddings and divisible groups with embeddings can be obtained by algebraic methods. More precisely, $\oplus_{p \text{ prime}} (\oplus_{\lambda} \mathbb{Z}(p^\infty))$ is the universal model of size $\lambda$ for torsion groups and $\oplus_{p \text{ prime}} (\oplus_{\lambda} \mathbb{Z}(p^\infty)) \oplus \oplus_\lambda \mathbb{Q}$ is the universal model of size $\lambda$ for divisible groups.
\end{remark}


\end{document}